\documentclass[12pt]{article}

\usepackage{graphicx} 
\usepackage{amsfonts,amsmath,amssymb,amsthm}
\usepackage{hyperref}
\usepackage{tikz,float,tikz-cd}

\usepackage{color,soul}

\newtheorem{theorem}{Theorem}[section]
\newtheorem{proposition}[theorem]{Proposition}

\newtheorem{definition}[theorem]{Defintion}
\newtheorem*{remark}{Remark}

\def\multiset#1#2{\ensuremath{\left(\kern-.3em\left(\genfrac{}{}{0pt}{}{#1}{#2}\right)\kern-.3em\right)}}
\usepackage{chngcntr} 
\counterwithin{figure}{section}

\usepackage{caption}
\usepackage{subcaption}

\usepackage{xcolor}
\usepackage[dvipsnames]{xcolor}

\hypersetup{
    colorlinks=true,
    linkcolor=magenta,
    citecolor=blue
}

\title{Mirror Variations of Catalan Multijections}
\author{Muhammad Adam Dombrowski \thanks{Email: \href{mailto:muhammad_adam_dombrowski@brown.edu}{\tt muhammad\_adam\_dombrowski@brown.edu}}}
\date{\today}

\begin{document}

\maketitle

\begin{abstract}
	A Catalan multijection is a way to partition a set of $\binom{2n}{n}$ objects into equivalence classes of size $n+1$, where each equivalence class has exactly one distinct Catalan object. Isaak and Langley investigated when two multijections produced the same partitions in an unpublished manuscript. We propose some new multijections for which we get different partitions from previously discovered multijections.
\end{abstract}

\section{Introduction}

The Catalan numbers, $\{1, 1, 2, 5, 14, 42...\}$, are an integer sequence (\href{https://oeis.org/A000108}{A000108}) that appears in many places: triangulations of $n$-gons, binary strings, and lattice paths that stay below the diagonal. We denote the $n$-th Catalan number by $C_n$. There are multiple formulas for this integer sequence, including recurrence relations, but the most relevant for us will be the following formula given by the Chung-Feller Theorem.

\begin{equation*}
    C_n=\frac{1}{n+1}\binom{2n}{n}
\end{equation*}

Before stating the theorem, we would like to define what we mean by lattice paths. The following rigorous definition comes from Stanley\cite{stanley2015catalan}.

\begin{definition}[Lattice Paths in $\mathbb{Z}^d$]
    Let $S$ be a subset of $\mathbb{Z}^d$. A lattice path in $\mathbb{Z}^d$ of length $k$ with steps in $S$ is a sequence $v_0,v_1,\dots, v_k\in\mathbb{Z}^d$ such that each consecutive difference $v_i-v_{i-1}$ lies in $S$.
\end{definition}

For the purposes of this paper, we only need to consider lattice paths in $\mathbb{Z}^2$, as well as the consecutive difference $v_i-v_{i-1}$ being either $(1,0)$ or $(0,1)$. Additionally, we will consider lattice paths of the previous form that run from $(0,0)$ to $(n,n)$. We will adopt the following narrower definition for lattice paths in the context of this paper.

\begin{definition}
    Let $n\in\mathbb{N}$. A lattice path in $\mathbb{Z}^2$ of length $2n$ is a sequence $p_1p_2\dots p_n$, where each $p_i\in \{(1,0),(0,1)\}$, or in this specific case, denoted $R$ for right-step $(1,0)$ and $U$ for up-step $(0,1)$. In general, we call $p_i$ the steps of a lattice path.
\end{definition}

\begin{remark}
    Let $p,q$ be steps in a lattice paths. Then $p$ and $q$ are equivalent, denoted by $p\cong q$, if they both correspond to the same type of step; if $p,q$ are both up-steps then $p\cong q$, and if $p$ is an up-step and $q$ is a down-step, then $p\ncong q$.
\end{remark}

There are two common ways to visualize these lattice paths. The first is as we have stated, a lattice path consists of steps $(1,0)$ and $(0,1)$ from the origin to $(n,n)$. However, we only acknowledge and will make use of an alternative definition which considers steps of $(1,1)$ or $(1,-1)$, creating a path from $(0,0)$ to $(n,0)$. The former will be referred as a lattice path in \textit{staircase notation} and the latter as a lattice path in \textit{mountain notation}. 

\begin{remark}
    A lattice path in staircase notation will be a sequence $p_1p_2\dots p_n$ from $(0,0)$ to $(n,n)$ where each $p_i$ is either $(1,0)$, denoted as a right-step $r$, or $(0,1)$, denoted as an up-step $u$. A lattice path in mountain notation will be a sequence $p_1p_2\dots p_n$ from $(0,0)$ to $(n,0)$ where each $p_i$ is either $(1,1)$, denoted as an up-step $u$ or $(1,-1)$, denoted as a down-step $d$.
\end{remark}

We also define an equivalence of lattice paths.

\begin{definition}\label{latticepathequivalence}
    Let $P,Q$ be lattice paths. Then $P$ and $Q$ are equivalent, denoted by $P\cong Q$, if $P$ and $Q$ are the same length, say $n$, and $p_i\cong q_i$ for $i=1,\dots, n$. 
\end{definition}

The number of lattice paths from $(0,0)$ to $(n,n)$ is not counted by the Catalan numbers. However, if we impose a certain condition on these lattice paths however, then we get something that is counted by $C_n$.

\begin{definition}
    Let $P$ be a lattice path in staircase notation. Then, $P$ is a Dyck path if it stays below the diagonal $y=x$.
\end{definition}

If a lattice path is in mountain notation, then a Dyck path will be a lattice path which does not go below the $x$-axis. This follows a bit less intuitively from the given definition in staircase notation, but the reason for this note is to align our setup with a certain proof which defined the Dyck path with respect to this parameter. The reader can see that this labeling of ``above'' the $x$-axis, ``below'' the diagonal, and its corresponding labeling of the diagonal is arbitrary; the important part is consistently choosing this labeling in order to properly define Dyck paths in context.

\begin{figure}

\begin{center}
\begin{tikzpicture}

\draw[step=1cm,gray,very thin] (0,0) grid (3,3);
\draw[blue, dotted, very thick] (0,0) -- (3,3);
\draw[-stealth,black,ultra thick] (0,0) -- (1,0);
\draw[-stealth,black,ultra thick] (1,0) -- (1,1);
\draw[-stealth,black,ultra thick] (1,1) -- (2,1);
\draw[-stealth,black,ultra thick] (2,1) -- (3,1);
\draw[-stealth,black,ultra thick] (3,1) -- (3,2);
\draw[-stealth,black,ultra thick] (3,2) -- (3,3);
\end{tikzpicture}
\qquad
\begin{tikzpicture}
\draw[step=1cm,gray,very thin] (0,-2) grid (6,2);
\draw[blue, dotted, very thick] (0,0) -- (6,0);
\draw[-stealth,black,ultra thick] (0,0) -- (1,-1);
\draw[-stealth,black,ultra thick] (1,-1) -- (2,0);
\draw[-stealth,black,ultra thick] (2,0) -- (3,-1);
\draw[-stealth,black,ultra thick] (3,-1) -- (4,-2);
\draw[-stealth,black,ultra thick] (4,-2) -- (5,-1);
\draw[-stealth,black,ultra thick] (5,-1) -- (6,0);
\end{tikzpicture}
\end{center}
\caption{Staircase notation (left) and mountain notation (right).}
\label{notation}
\end{figure}

Notice how the lattice path drawn in Figure \ref{notation} does not cross the diagonal. The lattice path drawn in Figure \ref{notdyckpath} is not an example of a Dyck path.

\begin{figure}
\begin{center}
\begin{tikzpicture}
\draw[step=1cm,gray,very thin] (0,0) grid (3,3);
\draw[gray, dotted, very thick] (0,0) -- (3,3);
\draw[-stealth,black,ultra thick] (0,0) -- (1,0);
\draw[-stealth,black,ultra thick] (1,0) -- (1,1);
\draw[-stealth,black,ultra thick] (1,1) -- (2,1);
\draw[-stealth,black,ultra thick] (2,1) -- (2,2);
\draw[-stealth,black,ultra thick] (2,2) -- (2,3);
\draw[-stealth,black,ultra thick] (2,3) -- (3,3);
\end{tikzpicture}
\end{center}
\caption{This lattice path is not a Dyck path.}
\label{notdyckpath}
\end{figure}

Now, we are ready to give the Chung-Feller Theorem. Originally proved by Chung and Feller in a slightly different form in \cite{chungfeller}, the following statement is still equivalent.

\begin{theorem}[Chung-Feller Theorem]\label{chungfeller}
	The number of lattice paths consisting of $n$ up-steps and $n$ right-steps which stay below the diagonal, or Dyck paths with $n$ up-steps and $n$ right-steps is $\frac{1}{n+1}\binom{2n}{n}$.
\end{theorem}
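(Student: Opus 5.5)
The plan is to prove the statement in its Chung--Feller form, since that is the form that matches the theme of multijections. I will partition the $\binom{2n}{n}$ lattice paths with $n$ up-steps and $n$ right-steps into $n+1$ classes according to the number of \emph{exceedances}, and show that each class has the same size. For a path $P$ in staircase notation, let $e(P)\in\{0,1,\dots,n\}$ be the number of up-steps lying strictly above the diagonal $y=x$. In mountain notation these are the up-steps lying above the axis, once we orient things so that Dyck paths are the paths with $e(P)=0$. The Dyck paths are then exactly the class $e(P)=0$. It therefore suffices to show that for each $k$ with $1\le k\le n$ there is a bijection between $\{P: e(P)=k\}$ and $\{P: e(P)=k-1\}$. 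Chaining these bijections shows that all $n+1$ classes have equal size, so each has $\frac{1}{n+1}\binom{2n}{n}$ elements.

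\textbf{The bijection.} I would use the standard first-return decomposition. Work in mountain notation with steps $u,d$, arranged so that ``good'' steps are those above the axis, and let $P$ have $k\ge 1$ bad up-steps. Let $s$ be the first step that leaves the good region: the first down-step starting on the axis, after reflecting the convention if needed. Let $Q$ be the segment until the path next returns to the axis. Then
\[
P = A\, s\, B\, t\, C,
\]
where $A$ is a nonnegative (Dyck) prefix ending on the axis, and $s\,B\,t$ is a negative excursion with first step $s$ and last step $t$. Define
\[
\phi(P) = A\, B\, t'\, s'\, C,
\]
or an equivalent rearrangement. The intended effect is that the excursion $B$, which was below the axis, is moved above it by shifting it up one level. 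This should decrease the count of bad up-steps by exactly one: the excursion $sBt$ contributes some number of bad up-steps, and after the move $B$ sits entirely in the good region while $C$ is untouched. I would adjust the precise rearrangement so that exactly one bad unit is removed. A cleaner choice is to cut at the \emph{last} bad up-step and swap around it.

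\textbf{Key steps, in order.}
\begin{enumerate}
\item Fix the convention for the statistic $e$ and check that $e(P)=0$ holds exactly for Dyck paths.
\item Write the decomposition precisely.
\item Verify that $\phi$ changes $e$ by exactly $-1$, by computing the heights of every step before and after.
\item Construct the inverse map. Given a path with $e=k-1<n$, locate the unique last point where the good prefix returns to the axis before the relevant suffix, and reverse the swap.
\end{enumerate}

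\textbf{Main obstacle.} I expect the hardest step to be the inverse: showing that the cut point is recoverable from $\phi(P)$, so that $\phi$ is injective and surjective. I would first try to define the cut as the last return to the axis of the initial Dyck portion. I would then check carefully that $\phi$ preserves this cut point as a uniquely identifiable feature, such as the last minimum of the path.

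If this proves messy, the fallback is a generating-function argument. Decomposing a path by first return gives, for $F(x,t)=\sum_P x^{\text{length}/2}t^{e(P)}$, a functional equation in terms of the Catalan series $c(x)$. Solving it shows that the coefficient of $x^n$ is $C_n(1+t+\dots+t^n)$, which yields the same equidistribution.
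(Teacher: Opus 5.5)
Your framework, showing that $e$ is equidistributed via bijections $\{e=k\}\to\{e=k-1\}$ and then chaining them, is the one the paper uses, with Callan's bijection supplying the maps. The gap is that the bijection carries the whole proof, and the concrete candidates you write down fail.

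Reading $t's'$ as $ud$ (the only choice that lowers $e$), the map $A\,s\,B\,t\,C\mapsto A\,B\,u\,d\,C$ is not injective. For $n=2$, take $uddu$ (with $A=ud$, $B=C=\emptyset$) and $duud$ (with $A=B=\emptyset$, $C=ud$). Both have $e=1$ and both map to $udud$, while $uudd$ is never hit. The heuristic behind the map is also off. $B$ lies weakly below height $-1$, so raising it one unit leaves it weakly below the axis, with every up-step still bad. The count drops by one only because $t$ was moved, and the cut points cannot be located again in the image. Your ``cleaner'' alternative, cutting at the last bad up-step and rotating, can lower $e$ by more than one. For example, $dudu\mapsto udud$ takes $e$ from $2$ to $0$, because every up-step of the prefix from height $-1$ to $0$ becomes good along with $t$.

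The missing idea is to keep your decomposition but rotate about the return step $t$:
\[
A\,s\,B\,t\,C\;\longmapsto\;C\,t\,A\,s\,B .
\]
Check the effect on $e$ piece by piece. $C$ is untouched. $t$ now climbs from $0$ to $1$, so it is good. $A$ is raised to lie weakly above height $1$, so it stays good. $B$ is raised to lie weakly below $0$, so every one of its up-steps stays bad. Hence $e$ drops by exactly one.

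For the inverse, a path $Q$ with $e(Q)=k-1<n$ must rise above the axis. Its last down-step from height $1$ to $0$ is $s$, and the last up-step from $0$ to $1$ before that is $t$. This recovers $C$, $A$, $B$, and hence $P=A\,d\,B\,u\,C$ with exactly the decomposition you started from. This is Callan's map as the paper describes it (swap the parts before and after the first return down-step following the first flaw), reflected to your convention.

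Your generating-function fallback is a genuinely different and valid route, but as written it is only asserted. You need to derive $F=1/\bigl(1-xc(x)-xt\,c(xt)\bigr)$. Then, using $xc(x)^2=c(x)-1$, check that $\bigl(1-xc(x)-xt\,c(xt)\bigr)\bigl(c(x)-t\,c(xt)\bigr)=1-t$. This gives $F=\bigl(c(x)-t\,c(xt)\bigr)/(1-t)$ and $[x^n]F=C_n(1+t+\dots+t^n)$. That route proves equidistribution only and produces no explicit multijection, which is what the paper's framework is about.
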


There are many proofs of the Chung-Feller Theorem, ranging from analytic to combinatorial. The $\frac{1}{n+1}$ in the theorem suggests that one way to prove this is to show that a larger set of objects of size $\binom{2n}{n}$ can be divided into subsets of $n+1$ subsets of equal size, with each subset containing a Catalan object. 

For example, in the case of general lattice paths from $(0,0)$ to $(n,n)$, there are $2n$ total steps, and all that is requires is to choose where the $n$ right-steps will go. Then, by the Chung-Feller Theorem, then $n+1$ divides our total number of lattice paths to give us the total number of Dyck paths.

Another perspective of this mapping is that we can divide into subsets of size $n+1$, but have one Dyck path per subset. For now, we can loosely call these subsets ``Catalan partitions''. While this labeling may sound novel, plenty of combinatorial Chung-Feller proofs induce this partitioning. Finally, we can start to ask when two of these kind of proofs are equivalent.

\section{Defining Multijections}

Isaak and Langley provided a framework that characterizes this approach with the following two definitions. The following two defintions are attributed to Isaak from \cite{isaakmultijections}.

\begin{definition}[Multijection]\label{multijection1}
    Assume that $A$ and $\mathcal{A}$ are finite sets. We will say that a map $\alpha$ from $A$ to $\mathcal{A}$ is a $t$-uniform multijection if the preimages $\alpha^{-1}(a)$ for $a\in\mathcal{A}$ all have size $t$. For a given function, the relation $xRy$ if $f(x)=f(y)$ is trivially an equivalence relation so the preimages partition the domain $A$. Then, since all have the same size, $t$. we get $|\mathcal{A}|=\frac{1}{t}|A|$ for a $t$-uniform multijection.
\end{definition}

One can see the resemblance of a combinatorial Chung-Feller proof under this broader notion of a multijection. From here on, we will use the term ``multijection'' when describing other combinatorial Chung-Feller proofs.

Within the context of lattice paths, we can imagine that $A$ is our set of general lattice paths of length $2n$, and $\mathcal{A}$ is the set of Dyck paths of length $2n$. Let $\alpha$ be a multijection that partitions $A$ into our Catalan partitions, each partition mapping to a distinct member $a\in\mathcal{A}$. To recover our partitions, then we simply consider the preimage $\alpha^{-1}(a)$. For context, suppose that $\alpha$ is the exceedance multijection. The left-most lattice path in Figure \ref{equivalenceclass1} is a Dyck path by our definition, or our element of $\mathcal{A}$. Then all four lattice paths lie in the preimage of the left one under $\alpha$, and they consist one Catalan partition.

With this definition, we can define when we expect two multijections to be equivalent.

\begin{definition}\label{equivalentmultijections}
    Let $\alpha$ be a $t$-uniform multijection from $A$ to $\mathcal{A}$ and $\beta$ a $t$-uniform multijection from $B$ to $\mathcal{B}$ with $|A|=|B|$ and hence also $|\mathcal{A}|=|\mathcal{B}|$. Let $\sigma$ be a bijection from $A$ to $B$.

    We will say that $\alpha$ and $\beta$ are translations of each other with respect to $\sigma$ if, under $\sigma$ the preimages induce the same partition. That is, if $\alpha(a_1)=\alpha(a_2)$ implies $\beta(\sigma(a_1))=\beta(\sigma(a_2))$ and hence also $\beta(b_1)=\beta(b_2)$ implies $\alpha(\sigma^{-1}(b_1))=\alpha(\sigma^{-1}(b_2))$.
\end{definition}

One note is that for the purposes of the paper, a multijection will always be $n+1$ uniform for us. These definitions give us a rigorous footing to describe equivalent proofs, but we wish to give an intuitive approach. A multijection is a way of partitioning a larger set of objects in Catalan equivalence classes. Intuitively, two multijections are the same when they induce the same partitions. 

While the concept of a \textit{multijection} is newer, there is plenty of literature involving a multijection approach, such as \cite{callan1995pair}, \cite{woan2001uniform}. These papers usually incorporate two things: a uniformly distributed parameter, and an additional bijection. The following defintion comes from Woan \cite{woan2001uniform}.

\begin{definition}
    A uniformly distributed parameter $\gamma$ is a parameter which is uniformly distributed on the values $[1,\dots,n]$.
\end{definition}

Technically, a uniformly distributed parameter sounds more complicated than it needs to be. The most relevant example for this paper would be ``flaws'', counted in staircase notation as the number of ``up-steps'' above the diagonal, and in mountain notation as the number of ``up-steps'' above the $x$-axis. The name ``flaw'' can be assigned to the fact that lattice paths that contain 0 flaws are Dyck paths. Other examples can be found in \cite{woan2001uniform}. In this context, we will say that $\alpha$ is a multijection with respect to uniformly distributed parameter $\gamma$ if the partitions of the multijection are dictated by $\gamma$.

This means that for a given $n$, considering lattice paths of length $2n$ between $(0,0)$ and $(n,n)$. Then the sets $S_k$, where $S_k$ consists of $2n$ length lattice paths with $k$ flaws satisfies. 

\begin{equation}\label{bijectionintro}
    |S_0|=|S_1|=\dots = |S_{n-1}|=|S_n|
\end{equation}

The reader may wonder if these constitute the partitions from our multijection, and the short answer is that they do not. For example, if $k$ denotes the number of flaws, then $S_0$ would be the set of Dyck paths. In order to align with our multijection definition, then there must only be one Dyck path per partition.

We gave the definition of uniformly distributed parameter earlier, but the equality in \ref{bijectionintro} is not immediately obvious in general. Multijection literature focuses on proving a bijection between the sets $S_k$ and $S_{k+1}$. If this is proven, then all sets $S_k$ have same size, and the multijection partitions can finally be constructed.

Let $\phi_k:S_k\to S_{k+1}$ be the bijection between $S_k$ and $S_{k+1}$. This bijection uniquely links an element of $S_k$ with $S_{k+1}$. However, for each element of our Dyck path set, there is a unique chain of elements through $S_k$, each linking unique elements in each set along the way. More importantly, if we chain $\phi_k$, then for an element in $S_k$, there is a unique element in $S_0$ that it maps to. This suggests a perfect case to apply our multijection framework to.

In all future multijections setups in this paper, we assume that we begin with the larger set $L_n$ of lattice paths between $(0,0)$ and $(n,n)$ of length $2n$ consisting of $n$ up-steps, and $n$ right-steps. There is some parameter $\gamma$, which dictates the partition of $L_{2n}$ into sets $S_0,S_1,\dots S_k$. where $k$ is the amount of ``$\gamma$''s a lattice path contains. For the paper, we only really consider when $\gamma$ are the flaws of a Dyck path. $S_0$ becomes our Catalan set $C_{2n}$. In this paper, $\gamma$ is strictly flaws, but it should be possible to generalize this further to other uniformly distributed parameters.

\begin{proposition}\label{setup}
    For $k=0,1,\dots,n$ let $S_k$ be the subset of lattice paths with $k$ flaws and suppose that a proof $P$ proves bijections $\phi_k:S_k\to S_{k+1}$. Then we say the multijection $\alpha:L_{2n}\to C_{2n}$ is defined by the following: for $p\in S_k$, then $\alpha(p)=\phi_0^{-1}\dots\phi_{k-1}(\phi_k^{-1}(p))$.
\end{proposition}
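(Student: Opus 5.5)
The statement is essentially a definition, so the work is to check that the displayed map really is an $(n+1)$-uniform multijection in the sense of Definition~\ref{multijection1}. My first step is to fix the intended reading of the formula, since its indices as printed are inconsistent. Only $\phi_0,\dots,\phi_{n-1}$ make sense, because $S_{n+1}=\emptyset$. For $p\in S_k$ I will read the formula as
\begin{equation*}
\alpha(p)=\psi_k(p),\qquad \psi_k:=\phi_0^{-1}\circ\phi_1^{-1}\circ\cdots\circ\phi_{k-1}^{-1}:S_k\to S_0,
\end{equation*}
with $\psi_0=\mathrm{id}_{S_0}$. That is, one steps down one flaw at a time until reaching a path with zero flaws.

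Next I check that $\alpha$ is well defined on all of $L_{2n}$. Every lattice path from $(0,0)$ to $(n,n)$ has exactly $n$ up-steps, so its number of flaws is a single integer between $0$ and $n$. Hence $S_0,\dots,S_n$ are pairwise disjoint and their union is $L_{2n}$, and each $p$ lies in exactly one $S_k$. Each $\phi_j^{-1}$ is a bijection $S_{j+1}\to S_j$, so $\psi_k$ is a composition of bijections and is therefore a bijection $S_k\to S_0$. Its image lies in $S_0$, which is exactly the set of Dyck paths $C_{2n}$, since paths with zero flaws are Dyck paths.

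Then I compute the fibres. Fix $d\in C_{2n}=S_0$. Since the $S_k$ partition $L_{2n}$, we have
\begin{equation*}
\alpha^{-1}(d)=\bigsqcup_{k=0}^{n}\psi_k^{-1}(d).
\end{equation*}
Because each $\psi_k$ is a bijection, each piece is a single point, namely $\phi_{k-1}\circ\cdots\circ\phi_0(d)$. Therefore $|\alpha^{-1}(d)|=n+1$ for every $d$, so $\alpha$ is $(n+1)$-uniform. This also recovers $|C_{2n}|=\frac{1}{n+1}\binom{2n}{n}$, consistent with Theorem~\ref{chungfeller}. In addition, each fibre contains exactly one Dyck path, namely $d$ itself (the $k=0$ term). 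This is precisely the ``Catalan partition'' property: each class meets each $S_k$ exactly once, and the class is the chain $d,\phi_0(d),\phi_1\phi_0(d),\dots$.

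The only real obstacle is the bookkeeping in the statement: the composition as printed and the range $k=0,\dots,n$ for the $\phi_k$ must be corrected to the reading above. I will also note in passing that $\alpha$ depends on the chosen family $(\phi_k)$ from the proof $P$. Different proofs can give different partitions, which is why Definition~\ref{equivalentmultijections} is needed to compare them.
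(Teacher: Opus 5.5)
Your proposal is correct and follows the same argument as the paper: the fibre of a Dyck path $d$ is the chain $d,\phi_0(d),\phi_1(\phi_0(d)),\dots,\phi_{n-1}(\cdots\phi_0(d))$, which has exactly one element in each $S_k$, so every fibre has size $n+1$. Your version is tidier in three ways: you repair the indexing (only $\phi_0,\dots,\phi_{n-1}$ exist, and $\alpha|_{S_k}=\phi_0^{-1}\circ\cdots\circ\phi_{k-1}^{-1}$); you use the partition $L_{2n}=\bigsqcup_k S_k$ to show the fibre is exactly this chain, rather than only checking that each chain element maps to $d$; and you avoid the paper's unnecessary appeal to Callan's uniform-distribution result.
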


\begin{proof}
    Throughout this write-up, we interchange between $S_0$ and $C_{2n}$, since a zero-flaw lattice path would be a Dyck path, and it would be in our Catalan set. We would need to show that for all $\tilde{p}\in C_{2n}$, that the preimage of $\tilde{p}$, $\alpha^{-1}(\tilde{p})$ all have equal size, particularly $n+1$. Callan's paper showed that the number of flaws of a lattice path are uniformly distributed, and exhibited that the sets $|S_0|=|S_1|=\dots =|S_k|$.

    What does the preimage of $\tilde{p}\in C_{2n}$ look like? By construction, the preimage is the following set

    \begin{equation*}
        \{\tilde{p},\phi_0(\tilde{p}),\phi_1(\phi_0(\tilde{p})),\dots,\phi_{n-1}(\dots \phi_1(\phi_0(\tilde{p})))\}
    \end{equation*}

    We can verify this by applying $\alpha$ to each element. For $\tilde{p}$, $\alpha$ simply sends to itself. For $\phi_0(\tilde{p})$, since $\phi_0$ is a bijection between $S_0$ to $S_1$, then $\phi_0(\tilde{p})$ is an element of $S_1$. Since $\phi_0(\tilde{p})$ is also unique, i.e. there will not be another element of $S_0$. Let's check that $\phi_0(\tilde{p})$ maps to $\tilde{p}$. From our definition of $\alpha$, we get

    \begin{equation*}
        \alpha(\phi_0(\tilde{p}))=\phi_0^{-1}(\phi_0(\tilde{p}))=\tilde{p}
    \end{equation*}

    This process is repeatable for each element. For example, $\phi_1(\phi_0(\phi_0(\tilde{p})))$ is an element of $S_2$, and so forth. Since $\tilde{p}$ was arbitrary, then this construction works for all $\tilde{p}\in C_{2n}$. The bijections imply that all elements are mapped to a unique element of $C_{2n}$. With this setup, then each preimage have size $n+1$. Since they have the same size, we are done.
    
\end{proof}

One nice thing about this approach is that it allows us to analyze Chung-Feller Theorem proofs by these partitions of lattice paths, rather than a top-down approach that only consider the proof method in itself. The multijection framework states that two multijections are the same if the partitions are the same. This gives us some motivation to find Chung-Feller Theorem proofs that create unique partitions.

\begin{remark}
    Proposition \ref{setup} proves that the multijection framework can be carried over to many combinatorial Chung-Feller Theorem proofs.
\end{remark}

With that, we have covered the preliminaries of multijections. We can proceed to apply this theory to proofs of the Chung-Feller Theorem which involve a permutation of paths.

\section{Path-swapping}

Young-Ming Chen published a proof of the Chung-Feller Theorem on lattice paths with respect to ``flaws'' of a Dyck path by proving that the sets with $k$ flaws were in bijection with one another, i.e. the equality described in \ref{bijectionintro}. He did this by decomposing a lattice path in $S_k$ into multiple subpaths based on specific steps in the path, then permuting the individual components to get a path in $S_{k+1}$. Then, he showed that this permutation was reversible. The general method here is clear; take a lattice path, then swap around parts of the path to get a new path, and show this transformation is well-defined and unique. For the purposes of this paper, we will call this path-swapping.

There are two types of multijections in literature which involve path-swapping: single-step and multiple-step.

\subsection{Single parameter path-swapping}

First, we present a definition for single-step parameter swapping.

\begin{definition}
    We call a path-swapping multijection a single-step path-swapping multijection if the bijection between the sets $S_k$ and $S_{k+1}$.
\end{definition}

There are plenty of examples of single-step parameter swapping. We will focus on a bijection presented by Callan in \cite{callan1995pair}. We will walkthrough his argument in mountain notation, but transition to staircase notation afterwards. Callan's notion of a flaw corresponds to our staircase notion of a flaw.

In reverse, we demonstrate a map from $S_{k+1}\to S_k$ first. To be clear, we want to take a lattice path with $k+1$ flaws, and return a lattice path with $k$ flaws, and show this process is reversible. For this demonstration, let $k=2$. We denote the $x$-axis at $y=0$ in blue. Draw horizontal lines above the $x$-axis, and label the up-steps from bottom row to top row, left to right, starting at $1$. An example is given in \ref{callan1}. 

\begin{figure}
    \centering
    \begin{tikzpicture}
        \centering
        \draw[step=1cm,gray,very thin] (0,-2) grid (6,2);
        \draw[blue, dotted, very thick] (0,0) -- (6,0);
        \draw[-stealth,black,ultra thick] (0,0) -- node[above left=-1pt, yshift=-3pt,text=black, font=\normalsize] {1} (1,1);
        \draw[-stealth,black,ultra thick] (1,1) -- node[above left=-1pt, yshift=-3pt,text=black, font=\normalsize] {2} (2,2);
        \draw[-stealth,black,ultra thick] (2,2) -- (3,1);
        \draw[-stealth,magenta,ultra thick] (3,1) -- (4,0);
        \draw[-stealth,black,ultra thick] (4,0) -- (5,-1);
        \draw[-stealth,black,ultra thick] (5,-1) -- (6,0);
    \end{tikzpicture}
    \caption{A lattice path with 2 flaws.}
    \label{callan1}
\end{figure}

Then, we identify the first down-step following up-step 1. This down-step is marked in magenta. It is worth noting that another way to think of this critical down-step is the first down-step touching the $x$-axis after the path crosses above the $x$-axis for the first time. This will be helpful for translating Callan's proof into staircase notation.

Now, swap the portions of the path before and after the highlighted down-step. The path in \ref{callan1} is turned into the path depicted in \ref{callan2}.

\begin{figure}
    \centering
    \begin{tikzpicture}
        \centering
        \draw[step=1cm,gray,very thin] (0,-2) grid (6,2);
        \draw[blue, dotted, very thick] (0,0) -- (6,0);
        \draw[-stealth,black,ultra thick] (0,0) -- (1,-1);
        \draw[-stealth,black,ultra thick] (1,-1) -- (2,0);
        \draw[-stealth,magenta,ultra thick] (2,0) -- (3,-1);
        \draw[-stealth,black,ultra thick] (3,-1) -- (4,0);
        \draw[-stealth,black,ultra thick] (4,0) -- (5,1);
        \draw[-stealth,black,ultra thick] (5,1) -- (6,0);
    \end{tikzpicture}

    \caption{From \ref{callan1}, the resulting lattice path with $1$ flaw.}
    \label{callan2}
\end{figure}

In order to reverse this process, we begin with the path in \ref{callan2}. We draw horizontal lines below the $x$-axis, and scan from right to left, starting in the first row below the $x$-axis and going down, identifing each up-step in chronological order. Similar to the above, we identify the down-step directly after the first up-step, in \ref{callan3}. We note two things; this down-step is the same down-step involved in the path-swapping demonstrated above, and that this down-step is the first down-step after the first up-step to touch the $x$-axis.

\begin{figure}
    \centering
    \begin{tikzpicture}
        \centering
        \draw[step=1cm,gray,very thin] (0,-2) grid (6,2);
        \draw[blue, dotted, very thick] (0,0) -- (6,0);
        \draw[-stealth,black,ultra thick] (0,0) -- (1,-1);
        \draw[-stealth,black,ultra thick] (1,-1) -- node[below right=-1pt, yshift=3pt,text=black, font=\normalsize] {2} (2,0);
        \draw[-stealth,magenta,ultra thick] (2,0) -- (3,-1);
        \draw[-stealth,black,ultra thick] (3,-1) -- node[below right=-1pt, yshift=3pt,text=black, font=\normalsize] {1} (4,0);
        \draw[-stealth,black,ultra thick] (4,0) -- (5,1);
        \draw[-stealth,black,ultra thick] (5,1) -- (6,0);
    \end{tikzpicture}

    \caption{Reversing the process.}
    \label{callan3}
\end{figure}

Thus, being able to reverse this process completes the bijection, and Callan concludes that the Chung-Feller Theorem is proved in this method. The proof is explained in more detail in Callan's original paper; we simply wanted to review this proof in the context of multijections. In \ref{action}, we switch from mountain notation to staircase notation.

\begin{figure}

\begin{subfigure}{0.5\textwidth}
\begin{center}
\begin{tikzpicture}
\draw[step=1cm,gray,very thin] (0,0) grid (3,3);
\draw[blue, dotted, very thick] (0,0) -- (3,3);
\draw[-stealth,black,ultra thick] (0,0) -- (0,1);
\draw[-stealth,black,ultra thick] (0,1) -- (0,2);
\draw[-stealth,black,ultra thick] (0,2) -- (1,2);
\draw[-stealth,magenta,ultra thick] (1,2) -- (2,2);
\draw[-stealth,black,ultra thick] (2,2) -- (2,3);
\draw[-stealth,black,ultra thick] (2,3) -- (3,3);
\end{tikzpicture}
\end{center}
\end{subfigure}
\begin{subfigure}{0.5\textwidth}
\begin{center}
\begin{tikzpicture}
\draw[step=1cm,gray,very thin] (0,0) grid (3,3);
\draw[gray, dotted, very thick] (0,0) -- (3,3);
\draw[-stealth,black,ultra thick] (0,0) -- (1,0);
\draw[-stealth,black,ultra thick] (1,0) -- (1,1);
\draw[-stealth,magenta,ultra thick] (1,1) -- (2,1);
\draw[-stealth,black,ultra thick] (2,1) -- (2,2);
\draw[-stealth,black,ultra thick] (2,2) -- (2,3);
\draw[-stealth,black,ultra thick] (2,3) -- (3,3);

\end{tikzpicture}
\end{center}
\end{subfigure}

\caption{\centering Callan's proof in staircase notation. Notice how we can ``rotate'' a mountain notation lattice paths to get staircase notation lattice paths in this case.}
\label{action}
\end{figure}

Now that we have described the proof, we can consider the multijection that corresponds to this proof. Rather than deal with the definition of a multijection, we consider the partitions induced by the proof. In Figure \ref{callanpartitions}, we show the partition for $n=3$, meaning a lattice path of length $2n$. Each row denoted $S_k$, with the top being our Dyck paths of length $2n$ and $S_0$ by convention of flaw. The following row is $S_1$, and so forth downwards. Each column represents one partition in the multijection induced by Callan's proof. As one can see, there is exactly one Dyck path in each column, and one element of $S_k$ in each column for each $k=0,\dots, n$.

\begin{figure}
\centering
\begin{subfigure}{0.18\textwidth}
\centering
\begin{tikzpicture}[scale=0.55]
\draw[step=1cm,gray,very thin] (0,0) grid (3,3);
\draw[gray, dotted, very thick] (0,0) -- (3,3);
\draw[-stealth,black,ultra thick] (0,0) -- (1,0);
\draw[-stealth,black,ultra thick] (1,0) -- (2,0);
\draw[-stealth,black,ultra thick] (2,0) -- (3,0);
\draw[-stealth,black,ultra thick] (3,0) -- (3,1);
\draw[-stealth,black,ultra thick] (3,1) -- (3,2);
\draw[-stealth,black,ultra thick] (3,2) -- (3,3);
\end{tikzpicture}
\end{subfigure}
\hfill
\begin{subfigure}{0.18\textwidth}
\centering
\begin{tikzpicture}[scale=0.55]
\draw[step=1cm,gray,very thin] (0,0) grid (3,3);
\draw[gray, dotted, very thick] (0,0) -- (3,3);
\draw[-stealth,black,ultra thick] (0,0) -- (1,0);
\draw[-stealth,black,ultra thick] (1,0) -- (2,0);
\draw[-stealth,black,ultra thick] (2,0) -- (2,1);
\draw[-stealth,black,ultra thick] (2,1) -- (3,1);
\draw[-stealth,black,ultra thick] (3,1) -- (3,2);
\draw[-stealth,black,ultra thick] (3,2) -- (3,3);
\end{tikzpicture}
\end{subfigure}
\hfill
\begin{subfigure}{0.18\textwidth}
\centering
\begin{tikzpicture}[scale=0.55]
\draw[step=1cm,gray,very thin] (0,0) grid (3,3);
\draw[gray, dotted, very thick] (0,0) -- (3,3);
\draw[-stealth,black,ultra thick] (0,0) -- (1,0);
\draw[-stealth,black,ultra thick] (1,0) -- (2,0);
\draw[-stealth,black,ultra thick] (2,0) -- (2,1);
\draw[-stealth,black,ultra thick] (2,1) -- (2,2);
\draw[-stealth,black,ultra thick] (2,2) -- (3,2);
\draw[-stealth,black,ultra thick] (3,2) -- (3,3);
\end{tikzpicture}
\end{subfigure}
\hfill
\begin{subfigure}{0.18\textwidth}
\centering
\begin{tikzpicture}[scale=0.55]
\draw[step=1cm,gray,very thin] (0,0) grid (3,3);
\draw[gray, dotted, very thick] (0,0) -- (3,3);
\draw[-stealth,black,ultra thick] (0,0) -- (1,0);
\draw[-stealth,black,ultra thick] (1,0) -- (1,1);
\draw[-stealth,black,ultra thick] (1,1) -- (2,1);
\draw[-stealth,black,ultra thick] (2,1) -- (3,1);
\draw[-stealth,black,ultra thick] (3,1) -- (3,2);
\draw[-stealth,black,ultra thick] (3,2) -- (3,3);
\end{tikzpicture}
\end{subfigure}
\hfill
\begin{subfigure}{0.18\textwidth}
\centering
\begin{tikzpicture}[scale=0.55]
\draw[step=1cm,gray,very thin] (0,0) grid (3,3);
\draw[gray, dotted, very thick] (0,0) -- (3,3);
\draw[-stealth,black,ultra thick] (0,0) -- (1,0);
\draw[-stealth,black,ultra thick] (1,0) -- (1,1);
\draw[-stealth,black,ultra thick] (1,1) -- (2,1);
\draw[-stealth,black,ultra thick] (2,1) -- (2,2);
\draw[-stealth,black,ultra thick] (2,2) -- (3,2);
\draw[-stealth,black,ultra thick] (3,2) -- (3,3);
\end{tikzpicture}
\end{subfigure}

\vspace{0.5cm}

\centering
\begin{subfigure}{0.18\textwidth}
\centering
\begin{tikzpicture}[scale=0.55]
\draw[step=1cm,gray,very thin] (0,0) grid (3,3);
\draw[gray, dotted, very thick] (0,0) -- (3,3);
\draw[-stealth,black,ultra thick] (0,0) -- (1,0);
\draw[-stealth,black,ultra thick] (1,0) -- (2,0);
\draw[-stealth,black,ultra thick] (2,0) -- (2,1);
\draw[-stealth,black,ultra thick] (2,1) -- (2,2);
\draw[-stealth,black,ultra thick] (2,2) -- (2,3);
\draw[-stealth,black,ultra thick] (2,3) -- (3,3);
\end{tikzpicture}
\end{subfigure}
\hfill
\begin{subfigure}{0.18\textwidth}
\centering
\begin{tikzpicture}[scale=0.55]
\draw[step=1cm,gray,very thin] (0,0) grid (3,3);
\draw[gray, dotted, very thick] (0,0) -- (3,3);
\draw[-stealth,black,ultra thick] (0,0) -- (1,0);
\draw[-stealth,black,ultra thick] (1,0) -- (1,1);
\draw[-stealth,black,ultra thick] (1,1) -- (2,1);
\draw[-stealth,black,ultra thick] (2,1) -- (2,2);
\draw[-stealth,black,ultra thick] (2,2) -- (2,3);
\draw[-stealth,black,ultra thick] (2,3) -- (3,3);
\end{tikzpicture}
\end{subfigure}
\hfill
\begin{subfigure}{0.18\textwidth}
\centering
\begin{tikzpicture}[scale=0.55]
\draw[step=1cm,gray,very thin] (0,0) grid (3,3);
\draw[gray, dotted, very thick] (0,0) -- (3,3);
\draw[-stealth,black,ultra thick] (0,0) -- (0,1);
\draw[-stealth,black,ultra thick] (0,1) -- (1,1);
\draw[-stealth,black,ultra thick] (1,1) -- (2,1);
\draw[-stealth,black,ultra thick] (2,1) -- (3,1);
\draw[-stealth,black,ultra thick] (3,1) -- (3,2);
\draw[-stealth,black,ultra thick] (3,2) -- (3,3);
\end{tikzpicture}
\end{subfigure}
\hfill
\begin{subfigure}{0.18\textwidth}
\centering
\begin{tikzpicture}[scale=0.55]
\draw[step=1cm,gray,very thin] (0,0) grid (3,3);
\draw[gray, dotted, very thick] (0,0) -- (3,3);
\draw[-stealth,black,ultra thick] (0,0) -- (1,0);
\draw[-stealth,black,ultra thick] (1,0) -- (1,1);
\draw[-stealth,black,ultra thick] (1,1) -- (1,2);
\draw[-stealth,black,ultra thick] (1,2) -- (2,2);
\draw[-stealth,black,ultra thick] (2,2) -- (3,2);
\draw[-stealth,black,ultra thick] (3,2) -- (3,3);
\end{tikzpicture}
\end{subfigure}
\hfill
\begin{subfigure}{0.18\textwidth}
\centering
\begin{tikzpicture}[scale=0.55]
\draw[step=1cm,gray,very thin] (0,0) grid (3,3);
\draw[gray, dotted, very thick] (0,0) -- (3,3);
\draw[-stealth,black,ultra thick] (0,0) -- (0,1);
\draw[-stealth,black,ultra thick] (0,1) -- (1,1);
\draw[-stealth,black,ultra thick] (1,1) -- (2,1);
\draw[-stealth,black,ultra thick] (2,1) -- (2,2);
\draw[-stealth,black,ultra thick] (2,2) -- (3,2);
\draw[-stealth,black,ultra thick] (3,2) -- (3,3);
\end{tikzpicture}
\end{subfigure}

\vspace{0.5cm}

\centering
\begin{subfigure}{0.18\textwidth}
\centering
\begin{tikzpicture}[scale=0.55]
\draw[step=1cm,gray,very thin] (0,0) grid (3,3);
\draw[gray, dotted, very thick] (0,0) -- (3,3);
\draw[-stealth,black,ultra thick] (0,0) -- (1,0);
\draw[-stealth,black,ultra thick] (1,0) -- (1,1);
\draw[-stealth,black,ultra thick] (1,1) -- (1,2);
\draw[-stealth,black,ultra thick] (1,2) -- (1,3);
\draw[-stealth,black,ultra thick] (1,3) -- (2,3);
\draw[-stealth,black,ultra thick] (2,3) -- (3,3);
\end{tikzpicture}
\end{subfigure}
\hfill
\begin{subfigure}{0.18\textwidth}
\centering
\begin{tikzpicture}[scale=0.55]
\draw[step=1cm,gray,very thin] (0,0) grid (3,3);
\draw[gray, dotted, very thick] (0,0) -- (3,3);
\draw[-stealth,black,ultra thick] (0,0) -- (0,1);
\draw[-stealth,black,ultra thick] (0,1) -- (0,2);
\draw[-stealth,black,ultra thick] (0,2) -- (1,2);
\draw[-stealth,black,ultra thick] (1,2) -- (2,2);
\draw[-stealth,black,ultra thick] (2,2) -- (3,2);
\draw[-stealth,black,ultra thick] (3,2) -- (3,3);
\end{tikzpicture}
\end{subfigure}
\hfill
\begin{subfigure}{0.18\textwidth}
\centering
\begin{tikzpicture}[scale=0.55]
\draw[step=1cm,gray,very thin] (0,0) grid (3,3);
\draw[gray, dotted, very thick] (0,0) -- (3,3);
\draw[-stealth,black,ultra thick] (0,0) -- (1,0);
\draw[-stealth,black,ultra thick] (1,0) -- (1,1);
\draw[-stealth,black,ultra thick] (1,1) -- (1,2);
\draw[-stealth,black,ultra thick] (1,2) -- (2,2);
\draw[-stealth,black,ultra thick] (2,2) -- (2,3);
\draw[-stealth,black,ultra thick] (2,3) -- (3,3);
\end{tikzpicture}
\end{subfigure}
\hfill
\begin{subfigure}{0.18\textwidth}
\centering
\begin{tikzpicture}[scale=0.55]
\draw[step=1cm,gray,very thin] (0,0) grid (3,3);
\draw[gray, dotted, very thick] (0,0) -- (3,3);
\draw[-stealth,black,ultra thick] (0,0) -- (0,1);
\draw[-stealth,black,ultra thick] (0,1) -- (1,1);
\draw[-stealth,black,ultra thick] (1,1) -- (2,1);
\draw[-stealth,black,ultra thick] (2,1) -- (2,2);
\draw[-stealth,black,ultra thick] (2,2) -- (2,3);
\draw[-stealth,black,ultra thick] (2,3) -- (3,3);
\end{tikzpicture}
\end{subfigure}
\hfill
\begin{subfigure}{0.18\textwidth}
\centering
\begin{tikzpicture}[scale=0.55]
\draw[step=1cm,gray,very thin] (0,0) grid (3,3);
\draw[gray, dotted, very thick] (0,0) -- (3,3);
\draw[-stealth,black,ultra thick] (0,0) -- (0,1);
\draw[-stealth,black,ultra thick] (0,1) -- (1,1);
\draw[-stealth,black,ultra thick] (1,1) -- (1,2);
\draw[-stealth,black,ultra thick] (1,2) -- (2,2);
\draw[-stealth,black,ultra thick] (2,2) -- (3,2);
\draw[-stealth,black,ultra thick] (3,2) -- (3,3);
\end{tikzpicture}
\end{subfigure}

\vspace{0.5cm}

\centering
\begin{subfigure}{0.18\textwidth}
\centering
\begin{tikzpicture}[scale=0.55]
\draw[step=1cm,gray,very thin] (0,0) grid (3,3);
\draw[gray, dotted, very thick] (0,0) -- (3,3);
\draw[-stealth,black,ultra thick] (0,0) -- (0,1);
\draw[-stealth,black,ultra thick] (0,1) -- (0,2);
\draw[-stealth,black,ultra thick] (0,2) -- (0,3);
\draw[-stealth,black,ultra thick] (0,3) -- (1,3);
\draw[-stealth,black,ultra thick] (1,3) -- (2,3);
\draw[-stealth,black,ultra thick] (2,3) -- (3,3);
\end{tikzpicture}
\end{subfigure}
\hfill
\begin{subfigure}{0.18\textwidth}
\centering
\begin{tikzpicture}[scale=0.55]
\draw[step=1cm,gray,very thin] (0,0) grid (3,3);
\draw[gray, dotted, very thick] (0,0) -- (3,3);
\draw[-stealth,black,ultra thick] (0,0) -- (0,1);
\draw[-stealth,black,ultra thick] (0,1) -- (1,1);
\draw[-stealth,black,ultra thick] (1,1) -- (1,2);
\draw[-stealth,black,ultra thick] (1,2) -- (1,3);
\draw[-stealth,black,ultra thick] (1,3) -- (2,3);
\draw[-stealth,black,ultra thick] (2,3) -- (3,3);
\end{tikzpicture}
\end{subfigure}
\hfill
\begin{subfigure}{0.18\textwidth}
\centering
\begin{tikzpicture}[scale=0.55]
\draw[step=1cm,gray,very thin] (0,0) grid (3,3);
\draw[gray, dotted, very thick] (0,0) -- (3,3);
\draw[-stealth,black,ultra thick] (0,0) -- (0,1);
\draw[-stealth,black,ultra thick] (0,1) -- (0,2);
\draw[-stealth,black,ultra thick] (0,2) -- (1,2);
\draw[-stealth,black,ultra thick] (1,2) -- (1,3);
\draw[-stealth,black,ultra thick] (1,3) -- (2,3);
\draw[-stealth,black,ultra thick] (2,3) -- (3,3);
\end{tikzpicture}
\end{subfigure}
\hfill
\begin{subfigure}{0.18\textwidth}
\centering
\begin{tikzpicture}[scale=0.55]
\draw[step=1cm,gray,very thin] (0,0) grid (3,3);
\draw[gray, dotted, very thick] (0,0) -- (3,3);
\draw[-stealth,black,ultra thick] (0,0) -- (0,1);
\draw[-stealth,black,ultra thick] (0,1) -- (0,2);
\draw[-stealth,black,ultra thick] (0,2) -- (1,2);
\draw[-stealth,black,ultra thick] (1,2) -- (2,2);
\draw[-stealth,black,ultra thick] (2,2) -- (2,3);
\draw[-stealth,black,ultra thick] (2,3) -- (3,3);
\end{tikzpicture}
\end{subfigure}
\hfill
\begin{subfigure}{0.18\textwidth}
\centering
\begin{tikzpicture}[scale=0.55]
\draw[step=1cm,gray,very thin] (0,0) grid (3,3);
\draw[gray, dotted, very thick] (0,0) -- (3,3);
\draw[-stealth,black,ultra thick] (0,0) -- (0,1);
\draw[-stealth,black,ultra thick] (0,1) -- (1,1);
\draw[-stealth,black,ultra thick] (1,1) -- (1,2);
\draw[-stealth,black,ultra thick] (1,2) -- (2,2);
\draw[-stealth,black,ultra thick] (2,2) -- (2,3);
\draw[-stealth,black,ultra thick] (2,3) -- (3,3);
\end{tikzpicture}
\end{subfigure}

\vspace{0.5cm}

\caption{Partitions induce by Callan's proof.}
\label{callanpartitions}
\end{figure}

\begin{remark}
    In the introduction to Callan's paper, he notes that the proof presented is derived from two papers, one by Blackwell and Hodges, and the other by Narayana. The paper by Narayana contains the words ``cyclic permutations'', although Callan does not make mention of his proof following a cyclic structure.
\end{remark}

There are other examples of single-parameter path-swapping, such as Woan \cite{woan2001uniform}. However, for the variations discussed in this paper, we will describe them in terms of the Callan proof.

\subsection{Multiple parameter path-swapping}

Many multijections which have some sort of path-swapping using involve a single-parameter. However, some path-swapping multijection offer more complex path-swapping, utilizing multiple parameters, and breaking down a path into more pieces than the multijections we discussed previously. To distinguish these, we call these \textit{multiple-step path-swapping}.

As mentioned earlier, our example of multiple-step path-swapping is Young-Ming Chen's proof of the Chung-Feller Theorem \cite{chen2008chung}. Just as we did with Callan's proof, we will briefly explain Chen's technique, and leave the reader to consult Chen's paper for the rigorous details. We also will do Chen's proof as well as its variations strictly in mountain notation, as well in his notation of ``flaws'' as the number of down-steps below the $x$-axis.

First, Chen fixes $n\in\mathbb{N}$, and chooses a lattice path $P$ of length $2n$ with $k$ flaws. He decomposes it into $BuAdC$, where $A,B,C$ are subpaths, and $u$ and $d$ denote the first up-step above the $x$-axis and first down-step touching the $x$-axis after $u$. Then, he reassembles a new path as $AdBuC$, proving this path has $k+1$ flaws. In the paper, Chen proves that this permutation of paths is reversible, thus proving his bijection. Chen also shows the partitions induced by his bijections in his paper for $n=3$.

\begin{figure}
\begin{center}
\begin{tikzpicture}
\draw[step=1cm,gray,very thin] (0,-2) grid (12,2);
\draw[gray, dotted, very thick] (0,0) -- (8,0);
\draw[-stealth,blue,ultra thick] (0,0) -- (1,-1);
\draw[-stealth,blue,ultra thick] (1,-1) -- (2,0);
\draw[-stealth,magenta,ultra thick] (2,0) -- (3,1);
\draw[-stealth,teal,ultra thick] (3,1) -- (4,2);
\draw[-stealth,teal,ultra thick] (4,2) -- (5,1);
\draw[-stealth,magenta,ultra thick] (5,1) -- (6,0);
\draw[-stealth,olive,ultra thick] (6,0) -- (7,-1);
\draw[-stealth,olive,ultra thick] (7,-1) -- (8,0);
\draw[-stealth,olive,ultra thick] (8,0) -- (9,-1);
\draw[-stealth,olive,ultra thick] (9,-1) -- (10,-2);
\draw[-stealth,olive,ultra thick] (10,-2) -- (11,-1);
\draw[-stealth,olive,ultra thick] (11,-1) -- (12,0);
\end{tikzpicture}
\end{center}
\caption{Chen's subpaths on a path between $(0,0)$ and $(12,0)$, \textcolor{teal}{A}, \textcolor{blue}{B}, and \textcolor{olive}{C}.}
\label{chen}
\end{figure}

As one can see, Chen's proof relies on the identification of two steps on the lattice path, one up-step and one down-step. This is different compared to Callan's reliance on a single up-step in order to establish the bijection between $S_k$ and $S_{k+1}$. Other proofs with a multiple-step format include Rubenstein\cite{rubenstein1994catalan}. The reader should notice these proofs tend to be more complicated in their methods, although still producing a beautiful bijection. With both proofs described, we move to discuss variations on the bijections given by Callan and Chen.

\section{The mirror}

In the last section, we asked the question of when some multijections procedure the same partitions. Path-swapping proofs which offer bijections such as Callan's \textit{Pair Them Up!} \cite{callan1995pair} mimic a cyclic structure. However, we would like to find new multijections which do not carry the inherent cycle structure of the Cycle Lemma, as well as many path-swapping proofs. This motivates our definition of the mirror. For defining the mirror and related propositions, we will use staircase notation.

\begin{definition}[The opposite]
    Let $P$ be a lattice path, and $p_i$ be the $i$-th step in the path. Let $\overline{p_i}$ denote the \textit{opposite} step; if $P$ is defined on the basis of up-steps and down-steps and $p_i$ is a up-step, then $\overline{p_i}$ is a down-step.
\end{definition}

The definition above is well-defined since lattice paths consisting of up-steps and down-steps or up-steps and right-steps are themselves defined on a binary alphabet. Naturally, this definition can be extended to other binary alphabets. This gives us a more rigorous footing to define the mirror of a lattice path.

\begin{definition}[Mirror]
    Let $P$ be a lattice path, and its representation be $p_1\dots p_n$. Then the mirror of $P$, which we denote with $\overline{P}$ is $\overline{p_n}\,\overline{p_{n-1}}\dots\overline{p_{2}}\,\overline{p_{1}}$.
\end{definition}

\begin{figure}
\begin{subfigure}{0.5\textwidth}
\begin{tikzpicture}
\draw[step=1cm,gray,very thin] (0,-2) grid (6,2);
\draw[gray, dotted, very thick] (0,0) -- (6,0);
\draw[-stealth,magenta,ultra thick] (0,0) -- (1,1);
\draw[-stealth,magenta,ultra thick] (1,1) -- (2,2);
\draw[-stealth,magenta,ultra thick] (2,2) -- (3,1);
\draw[-stealth,magenta,ultra thick] (3,1) -- (4,0);
\draw[-stealth,magenta,ultra thick] (4,0) -- (5,1);
\draw[-stealth,magenta,ultra thick] (5,1) -- (6,0);
\end{tikzpicture}
\caption{A lattice path $A$.}
\end{subfigure}
\begin{subfigure}{0.5\textwidth}
\begin{tikzpicture}
\draw[step=1cm,gray,very thin] (0,-2) grid (6,2);
\draw[gray, dotted, very thick] (0,0) -- (6,0);
\draw[-stealth,magenta,ultra thick] (0,0) -- (1,1);
\draw[-stealth,magenta,ultra thick] (1,1) -- (2,0);
\draw[-stealth,magenta,ultra thick] (2,0) -- (3,1);
\draw[-stealth,magenta,ultra thick] (3,1) -- (4,2);
\draw[-stealth,magenta,ultra thick] (4,2) -- (5,1);
\draw[-stealth,magenta,ultra thick] (5,1) -- (6,0);
\end{tikzpicture}
\caption{$\overline{A}$.}
\end{subfigure}
\caption{A lattice path $P$, and its mirror $\overline{P}$.}
\label{mirror}
\end{figure}

Intuitively, the mirror of a lattice is the path that would run if you held a mirror up to that lattice path. The rigorous definition makes it easier to prove a few propositions and properties of this. First, revisiting the opposite operation on a up-step/right-step, we would expect that if the opposite of a up-step is a right-step, then the opposite of the opposite of an up-step is an up-step, and similarly.

\begin{proposition}\label{steporder2}
    Let $p$ be an up-step or right-step in a lattice path. Then, $\overline{\overline{p}}\cong v$.
\end{proposition}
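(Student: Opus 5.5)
The statement contains an evident typo: the $v$ on the right-hand side should be $p$, so we read Proposition \ref{steporder2} as asserting that $\overline{\overline{p}}\cong p$. We will prove it directly from the definition of the opposite step together with the step equivalence $\cong$ from the earlier remark. The key fact is that steps live in a two-letter alphabet, and the opposite map swaps the two letters. So the proof is a case split on the type of $p$.

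The plan proceeds in three steps.

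\emph{Step 1.} Fix the binary alphabet in use. In staircase notation it is $\{u,r\}$; in mountain notation it is $\{u,d\}$. In either case the definition of the opposite says that $\overline{\,\cdot\,}$ sends each letter to the other letter. This map is well defined precisely because the alphabet has exactly two elements, as the paper notes after the definition.

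\emph{Step 2.} Split into cases according to the type of $p$.
\begin{itemize}
\item If $p$ is an up-step, then $\overline{p}$ is a right-step (respectively a down-step). Applying the opposite again turns that right-step (or down-step) back into an up-step. So $\overline{\overline{p}}$ is an up-step, and two up-steps are equivalent, giving $\overline{\overline{p}}\cong p$.
\item If $p$ is a right-step (or down-step), then $\overline{p}$ is an up-step, and $\overline{\overline{p}}$ is again a right-step (or down-step). Hence $\overline{\overline{p}}\cong p$ in this case too.
\end{itemize}

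\emph{Step 3.} Observe what the case analysis shows: the opposite operation is an involution on the two-element set of step types. Because $\cong$ compares steps only by type, the conclusion $\overline{\overline{p}}\cong p$ follows.

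There is no genuine obstacle here. The only care needed is interpretive: reading $v$ as $p$, and noting that the claim is equivalence of step types rather than literal equality of positioned steps. This distinction matters because $\cong$ in the earlier remark is defined by type. Once this is settled, the same argument applied to each step will later give $\overline{\overline{P}}\cong P$ for whole paths. That holds because reversing the order of the steps twice restores the original order.
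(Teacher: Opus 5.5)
Your proof is correct and is essentially the paper's argument. Both rest on the step alphabet having only two letters and split into two cases on the type of $p$. Since the opposite swaps the two letters, applying it twice returns a step of the original type.

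Your reading of $v$ as $p$, and your point that the conclusion is type-equivalence $\cong$, are both right. The paper's own proof also mixes up $v$ and $p$ and writes $=$ in its second case.
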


\begin{proof}
    This comes from the fact that for our lattice paths, we only consider two types of steps; a binary alphabet if you will. We could do casework on whether $v$ is an up-step or right-step.

    If $p$ is an up-step, then $\overline{p}$ is a right-step. This would imply that $\overline{\overline{p}}$ is an up-step. If $v$ is a right-step, then $\overline{p}$ is an up-step, and $\overline{\overline{p}}=p$.
    
\end{proof}

\begin{proposition}\label{pathorder2}
    Let $P$ be a lattice path. Then $\overline{\overline{P}}=P$.
\end{proposition}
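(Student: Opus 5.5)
The plan is to unwind the definition of the mirror twice and then use Proposition \ref{steporder2} termwise. I will write $P=p_1p_2\dots p_m$, where $m$ is the length of $P$. By the definition of the mirror,
\[
\overline{P}=q_1q_2\dots q_m, \qquad q_i=\overline{p_{m+1-i}},
\]
so $\overline{P}$ is again a lattice path of length $m$ over the same binary alphabet.

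Next I will apply the definition a second time, now to the path $q_1\dots q_m$. This gives
\[
\overline{\overline{P}}=\overline{q_m}\,\overline{q_{m-1}}\dots\overline{q_1}.
\]
Its $i$-th step is $\overline{q_{m+1-i}}$. Substituting $q_{m+1-i}=\overline{p_{m+1-(m+1-i)}}=\overline{p_i}$ shows that the $i$-th step of $\overline{\overline{P}}$ is $\overline{\overline{p_i}}$. The index bookkeeping here is routine: the reversal map $i\mapsto m+1-i$ is an involution on $\{1,\dots,m\}$, so the two reversals cancel.

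Finally, Proposition \ref{steporder2} gives $\overline{\overline{p_i}}\cong p_i$ for each $i$, meaning the doubly-opposite step is the same type of step as $p_i$. Both paths have length $m$ and agree stepwise in type, so Definition \ref{latticepathequivalence} gives $\overline{\overline{P}}\cong P$. Since a lattice path here is determined entirely by its sequence of step types, this equivalence is actual equality, $\overline{\overline{P}}=P$.

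I expect no real obstacle. The only point needing care is this last passage from $\cong$ to $=$. I would justify it by noting that steps are elements of the two-letter alphabet $\{u,r\}$ (or $\{u,d\}$ in mountain notation), so two steps of the same type are literally equal. Proposition \ref{steporder2} therefore in fact gives $\overline{\overline{p_i}}=p_i$, and equality of the paths follows.
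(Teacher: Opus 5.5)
Your proof is correct and takes the same route as the paper: expand $\overline{P}$ from the definition, mirror again to see that the $i$-th step of $\overline{\overline{P}}$ is $\overline{\overline{p_i}}$, and finish termwise with Proposition \ref{steporder2}. Your explicit index bookkeeping via $i\mapsto m+1-i$ and your remark on passing from $\cong$ to $=$ make rigorous what the paper's shorter argument leaves implicit. That argument also contains a small indexing slip, writing $\overline{\overline{p_1}}\,\overline{\overline{p_1}}\dots$ where the second factor should be $\overline{\overline{p_2}}$.
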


\begin{proof}
    Suppose that $P=p_1p_2\dots p_{2n}$. Then $\overline{P}=\overline{p_{2n}}\,\overline{p_{2n-1}}\dots \overline{p_1}$. This implies that $\overline{\overline{P}}=\overline{\overline{p_1}}\,\overline{\overline{p_1}}\dots\overline{\overline{p_{2n}}}$. By \ref{steporder2}, we can rewrite this as $p_1p_2\dots p_{2n}$, so we are done.
\end{proof}

\begin{proposition}\label{uniqueness}
    If $P$ is a lattice path, then $\overline{P}$ is unique, i.e. if there exists two lattice paths $P_1,P_2$ such that $\overline{P_1}\cong \overline{P_2}$, then $P_1\cong P_2$.
\end{proposition}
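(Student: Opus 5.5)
The cleanest route is to deduce injectivity of the mirror map from the fact that it is an involution, Proposition \ref{pathorder2}, working with the equivalence $\cong$ of Definition \ref{latticepathequivalence}. I would give the argument in two steps: first show that the mirror operation respects $\cong$, then apply the mirror to both sides of $\overline{P_1}\cong\overline{P_2}$ and use $\overline{\overline{P}}=P$.

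\textbf{Step 1: the opposite and the mirror respect $\cong$.} Suppose $p\cong q$ for two steps. Since there are only two step types, $p$ and $q$ are the same type, so $\overline{p}$ and $\overline{q}$ are both the other type, and hence $\overline{p}\cong\overline{q}$. Now let $Q_1=q_1\dots q_{m}$ and $Q_2=r_1\dots r_{m}$ satisfy $Q_1\cong Q_2$. Definition \ref{latticepathequivalence} forces them to have the same length $m$ and gives $q_i\cong r_i$ for every $i$. The $j$-th step of $\overline{Q_1}$ is $\overline{q_{m+1-j}}$, and the $j$-th step of $\overline{Q_2}$ is $\overline{r_{m+1-j}}$. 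By the step-level fact these two steps are equivalent, so $\overline{Q_1}\cong\overline{Q_2}$.

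\textbf{Step 2: undo the mirror.} Take $Q_1=\overline{P_1}$ and $Q_2=\overline{P_2}$, which satisfy $Q_1\cong Q_2$ by hypothesis. Step 1 gives $\overline{\overline{P_1}}\cong\overline{\overline{P_2}}$. Proposition \ref{pathorder2} rewrites the two sides as $P_1$ and $P_2$, so $P_1\cong P_2$. Lengths are handled automatically: $\overline{P_i}$ has the same length as $P_i$, and the hypothesis already forces $\overline{P_1}$ and $\overline{P_2}$ to have equal length.

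There is no real obstacle here. The main point needing care is the index reversal in Step 1: I must check that the reversal is the same permutation of positions for both paths, which holds precisely because they have the same length $m$. A second, minor point is that Proposition \ref{pathorder2} is stated as an equality, whereas we need it compatible with $\cong$. If one reads $=$ there as $\cong$, I would note that $\cong$ is an equivalence relation on steps, hence on paths componentwise, so the substitution is valid either way.
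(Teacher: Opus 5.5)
Your proof is correct, but it is organized differently from the paper's. The paper does not use Proposition \ref{pathorder2} here. It writes $P_1=p_1\dots p_n$ and $P_2=q_1\dots q_n$, and reads off $\overline{p_i}\cong\overline{q_i}$ position by position from $\overline{P_1}\cong\overline{P_2}$. It then applies the binary-alphabet fact in the reflecting direction ($\overline{p}\cong\overline{q}$ implies $p\cong q$) to conclude $p_i\cong q_i$ for all $i$. You instead prove the forward direction, that the mirror carries $\cong$-equivalent paths to $\cong$-equivalent paths, and let the involution $\overline{\overline{P}}=P$ do the inverting.

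Both arguments ultimately rest on the same two-letter-alphabet fact, since Proposition \ref{pathorder2} is itself derived from Proposition \ref{steporder2}. The paper's version is shorter and stays at the level of individual steps. Yours isolates the structural reason: an involution compatible with $\cong$ is automatically injective, indeed bijective, on $\cong$-classes, which also supplies the surjectivity used later in Proposition \ref{mubijection}. Your version is also more careful about two points the paper passes over quickly. First, the position reversal is the same permutation on both paths because their lengths agree. Second, the equal length follows from the hypothesis, rather than being assumed when both paths are written with $n$ steps.
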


\begin{proof}
    Let $P_1=p_1p_2\dots p_n$ and $P_2=q_1q_2\dots q_n$. By our definition of the mirror, then $\overline{P_1}=\overline{p_n}\,\overline{p_{n-1}}\dots\overline{p_1}$ and $\overline{P_2}=\overline{q_n}\,\overline{q_{n-1}}\dots\overline{q_1}$. If $\overline{P_1}\cong \overline{P_2}$, then $\overline{p_i}\cong\overline{q_i}$ for $i=1,\dots,n$. Since our lattice path is defined on a binary alphabet, then $p_i\cong q_i$ for $i=1,\dots,n$. Additionally, if $\overline{P_1}=\overline{P_2}$, then $P_1$ and $P_2$ must have the same length. By our definition in \ref{latticepathequivalence}, then $P_1\cong P_2$.
\end{proof}

The last proposition is important, since it establishes the mirror as a unique operation. We wish to apply this to constructing bijections between sets of lattice paths. We resume with the next proposition.

\begin{proposition}\label{sameupdown}
    Let $P$ be a lattice path in staircase notation. Then $P$ and $\overline{P}$ have the same amount of up-steps and right-steps above the diagonal, and the same amount of up-steps and right-steps below the diagonal.
\end{proposition}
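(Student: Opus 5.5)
The plan is to identify the mirror operation with a concrete geometric map on the lattice, namely reflection in the anti-diagonal $x+y=n$, and to show that this reflection preserves the signed distance $y-x$ to the diagonal. Write $P=p_1\dots p_{2n}$ and let $v_0=(0,0),v_1,\dots,v_{2n}=(n,n)$ be its vertices, with $v_i=(x_i,y_i)$. Let $w_0,\dots,w_{2n}$ be the vertices of $\overline{P}=\overline{p_{2n}}\dots\overline{p_1}$. The first $j$ steps of $\overline{P}$ are the opposites of $p_{2n-j+1},\dots,p_{2n}$. These steps of $P$ sum to $(n,n)-v_{2n-j}$, and taking opposites exchanges the two coordinates. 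Hence
\begin{equation*}
    w_j=(n-y_{2n-j},\,n-x_{2n-j}),
\end{equation*}
which is the image of $v_{2n-j}$ under $\rho(x,y)=(n-y,n-x)$. Since $(n-x)-(n-y)=y-x$, the map $\rho$ fixes the quantity $y-x$. So $v$ is above (respectively on, below) the diagonal exactly when $\rho(v)$ is.

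Next I will transfer this to steps. A step of a staircase path joins two lattice points whose values of $y-x$ differ by exactly $1$. Thus no step crosses the diagonal, and each step lies weakly on one side of it. Call a step above the diagonal if both endpoints satisfy $y-x\ge 0$ and at least one satisfies $y-x>0$, and define below symmetrically. By the formula for $w_j$, the step $\overline{p_i}$ of $\overline{P}$ is the segment $\rho(v_{i-1})\rho(v_i)$. Because $\rho$ preserves $y-x$, this segment lies on the same side of the diagonal as $p_i$. Therefore $i\mapsto 2n+1-i$ is a bijection between the steps of $P$ above the diagonal and the steps of $\overline{P}$ above the diagonal, and likewise below. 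Since $\rho$ swaps the coordinate axes, this bijection sends up-steps to right-steps and right-steps to up-steps. Consequently the number of up-steps of $\overline{P}$ above the diagonal equals the number of right-steps of $P$ above the diagonal, and vice versa.

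The remaining step, and the one I expect to need the most care, is to show that within a single path the number of up-steps above the diagonal equals the number of right-steps above the diagonal, and similarly below. I will decompose $P$ at the vertices lying on the diagonal into excursions. Each excursion starts and ends on $y=x$ and, by the no-crossing observation, lies entirely on one side. An excursion returns to the diagonal, so its net change in $y-x$ is zero, and hence it contains equally many up-steps and right-steps. Summing over the excursions above the diagonal gives the claimed equality for $P$, and summing over those below gives the equality below the diagonal. Applying the same argument to $\overline{P}$ and combining it with the bijection of the previous paragraph shows that $P$ and $\overline{P}$ have the same number of up-steps above the diagonal, the same number of right-steps above, and the same counts below. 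The only delicate points are fixing the above/below convention for steps that touch the diagonal, and checking the index bookkeeping $w_j=\rho(v_{2n-j})$.
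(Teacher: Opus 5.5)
Your proof is correct, and it takes a different route from the paper.

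\textbf{The paper's route.} The paper first handles paths that meet the diagonal only at their endpoints. It shows by induction on $n$ that the mirror of such a path stays on the same side. The inductive step is a contradiction argument: it cuts $\overline{P}$ at its diagonal contacts and applies the hypothesis to the shorter pieces. For a general $P$, it cuts the path into such excursions and writes $\overline{P}=\overline{P_{k_m}}\cdots\overline{P_{k_1}}$, so each excursion's contribution to the above and below tallies carries over unchanged.

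\textbf{Your route.} You compute the vertices of $\overline{P}$ explicitly as $w_j=\rho(v_{2n-j})$ with $\rho(x,y)=(n-y,n-x)$. In other words, the mirror is reflection in the anti-diagonal, and this reflection fixes $y-x$. That gives at once a step-level bijection $i\mapsto 2n+1-i$ which preserves the side of every individual step while exchanging up-steps and right-steps. You then need the excursion decomposition only for a simple fact: on each side of the diagonal, up-steps and right-steps are equinumerous. You need this precisely because the mirror swaps step types.

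\textbf{Comparison.} Your argument is shorter, non-inductive, and proves a finer statement. In particular, $\rho$ maps diagonal contacts to diagonal contacts and preserves $y-x$. So an excursion visibly maps to an excursion on the same side, which is exactly the primitive case the paper works to establish by induction. The same computation in mountain notation shows that the mirror simply reverses the height profile in time. That makes the paper's remark about the mountain case immediate. The paper's route stays at the level of words and concatenation of subpaths. This matches how the mirror is later applied to the subpaths $A$, $dBu$, $C$ in the complex mirror variation of Chen's proof. However, its inductive step is a roundabout way of obtaining what $\rho$ gives directly.
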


\begin{proof}
    First, we prove this for lattice paths which only touch the diagonal at the start and finish. Then, this will be true for all lattice paths, since any lattice path will be a concatenation of lattice paths which only touch the diagonal at their ends.

    We will use induction on $n$ to prove this for lattice paths of length $2n$ which only touch the diagonal at start and finish. For $n=1$, there are two lattice paths for which we can compute the number of up-steps and right-steps above and below the diagonal respectively. Our proposition is clearly true, as shown in Figure \ref{basecase}.

    \begin{figure}
        \centering
        \begin{subfigure}{0.4\textwidth}
        \centering
        \begin{tikzpicture}[scale=0.55]
        \draw[step=1cm,gray,very thin] (0,0) grid (1,1);
        \draw[blue, dotted, very thick] (0,0) -- (1,1);
        \draw[-stealth,black,ultra thick] (0,0) -- (1,0);
        \draw[-stealth,black,ultra thick] (1,0) -- (1,1);
        \end{tikzpicture}
        \end{subfigure}
        \hfill
        \begin{subfigure}{0.4\textwidth}
        \centering
        \begin{tikzpicture}[scale=0.55]
        \draw[step=1cm,gray,very thin] (0,0) grid (1,1);
        \draw[blue, dotted, very thick] (0,0) -- (1,1);
        \draw[-stealth,black,ultra thick] (0,0) -- (0,1);
        \draw[-stealth,black,ultra thick] (0,1) -- (1,1);
        \end{tikzpicture}
        \end{subfigure}
        
        \vspace{0.5cm}

        \begin{subfigure}{0.4\textwidth}
        \centering
        \begin{tikzpicture}[scale=0.55]
        \draw[step=1cm,gray,very thin] (0,0) grid (1,1);
        \draw[blue, dotted, very thick] (0,0) -- (1,1);
        \draw[-stealth,black,ultra thick] (0,0) -- (1,0);
        \draw[-stealth,black,ultra thick] (1,0) -- (1,1);
        \end{tikzpicture}
        \end{subfigure}
        \hfill
        \begin{subfigure}{0.4\textwidth}
        \centering
        \begin{tikzpicture}[scale=0.55]
        \draw[step=1cm,gray,very thin] (0,0) grid (1,1);
        \draw[blue, dotted, very thick] (0,0) -- (1,1);
        \draw[-stealth,black,ultra thick] (0,0) -- (0,1);
        \draw[-stealth,black,ultra thick] (0,1) -- (1,1);
        \end{tikzpicture}
        \end{subfigure}
        \caption{Lattice path with $1$ up-steps and $1$ right-step. The respective mirrors share the same up-steps and right-steps above and below the diagonal.}
        \label{basecase}
    \end{figure}

    Now, we assume this is true for lattice paths of this condition with at most $n$ up-steps and $n$ right-steps only touching the diagonal at ends. Now, we want to show this is true for lattice path containing $n+1$ up-steps, $n+1$ right-steps, and only touches the diagonal at the ends of the path. Let $P$ be a lattice path with this condition, and let $\overline{P}$ be its mirror. Without loss of generality, we assume $P$ lies below the diagonal, so it has $n+1$ up-steps and $n+1$ right-step below the diagonal, and zero of either step above the diagonal. For contradiction, we assume the mirror does not lie entirely below the diagonal. This implies that $\overline{P}$ intersects the diagonal at $(k_1,k_1)$ for some $k_1\in\mathbb{N}$, $k_1<n$. We will assume this is the first intersection at the diagonal by following the path from the start. Denote this subpath as $\overline{P_{k_1}}$. We can continue to do this by following the path along to the next time it touches the diagonal. This will eventually give us subpaths $P_{k_2},P_{k_3},\dots,P_{k_m}$, where $k_m$ is simply the final segment of the path that ends at $(n,n)$. Each $P_{k_m}$ is a path which only touches the diagonal at $(k_{m-1},k_{m-1})$ and $(k_m,k_m)$. So for $\overline{P_{k_m}}$, by construction, then $(k_m,k_m)=(n,n)$. Similarly, since we have that each segment either lies above or below the diagonal.

    For $\overline{P_{k_i}}$, if it starts at $(k_{i-1},k_{i-1})$ and ends at $(k_{i},k_i)$, the path either has $k_i-k_{i-1}$ up-steps below the diagonal and $k_i-k_{i-1}$ right-steps below the diagonal, or the same up-steps and right-steps above the diagonal. We also have by construction that $k_1<k_2<\dots<k_n=n$, so $k_i-k_{i+1}\geq 0$.

    In total, $\overline{P}=P_{k_1}P_{k_2}\dots P_{k_m}$. By \ref{pathorder2}, then $\overline{\overline{P}}=P$. So we have that $P=\overline{P_{k_m}}\,\overline{P_{m-1}}\dots \overline{P_2}\,\overline{P_1}$. Suppose $P_{k_i}$ lies entirely below the diagonal. Since $P_{k_i}$ has length less than $n+1$, by the induction hypothesis, then the mirror of $P_{k_i}$ also lies entirely below the diagonal. We have a similar case if $P_{k_i}$ lies completely below the diagonal. If $\overline{P}$ does not lie entirely below the diagonal, at least one $k_i$ segment lies entirely above the diagonal, call it $P_{k_\ell}$ This segment includes at least one up-step and at least one down-step above the diagonal, and by induction hypothesis, this is also true of $\overline{P_{k_\ell}}$ So $P$ must contain at least one up-step and one down-step above the diagonal, and at maximum $n$ up-steps and $n$ right-steps below the diagonal.

    However, this is a contradiction as we assumed $P$ to lie below the diagonal completely, which means that all $n+1$ up-steps and $n+1$ right-steps lied below the diagonal. Hence, this is true for all $n\in\mathbb{N}$, and for all lattice paths which touch the diagonal at the ends.

    Now, let $P$ be any lattice path in staircase notation,so it starts at $(0,0)$, ends at $(n,n)$ for $n\in\mathbb{N}$, and consists of up-steps and right-steps. Follow $P$ until it first touches the diagonal, at some $(k_1,k_1)$ for $k_1\leq n$. The path segment from $(0,0)$ to $(k_1,k_1)$ will be denoted by $P_{k_1}$. Continue this for each time $P$ touches the diagonal until $P=P_{k_1}P_{k_2}\dots P_{k_m}$. By construction, then each $P_{k_i}$ is a subpath which touches the diagonal at the ends. Let $c_{above}$ denote the set of subpaths $P_{k_i}$ such that $P_{k_i}$ lies strictly above the diagonal, and $c_{below}$ denote the subpaths $P_{k_i}$ that lie strictly below the diagonal. For $c_{\text{above}}$, associate a $x_{\text{above}}$ that tallies the total up-steps and right-steps above the diagonal, and do the same to $c_{\text{below}}$ with $x_{\text{below}}$. By the earlier work, for $P_{k_i}\in c_{\text{above}}$, $\overline{P_{k_i}}$ must also contains the same amount of up-steps and right-steps above the diagonal, and zero below the diagonal. Similarly, this extends to $c_{\text{below}}$. By the definition of the mirror, $\overline{P}=\overline{P_{k_m}}\,\overline{P_{k_{m-1}}}\dots \overline{P_{k_1}}$. Again, denote the sets $\overline{c}_{\text{above}}$ and $\overline{c}_{\text{below}}$ be the sets of subpaths that lie strictly above and below the diagonal respectively. By the above, $c_{\text{above}}$ $\overline{c}_{\text{above}}$ contains the same root paths, i.e $P_{k_i}\in c_{\text{above}}$ implies $\overline{P_{k_i}}\in \overline{c}_{\text{above}}$. Similarly, this follows for $c_{\text{below}}$ and $\overline{c}_\text{below}$. If we associate $\overline{x}_{\text{above}}$ to $\overline{c}_{above}$ in the same above fashion, then since $c_{\text{above}}$ and $\overline{c}_{\text{above}}$ have the same root set, then $x_{\text{above}}=\overline{x}_{\text{above}}$. We have the same setup for $\overline{x}_{\text{below}}$ to $\overline{c}_{\text{below}}$.

    If $x_{\text{above}}$ and $\overline{x}_{\text{above}}$ are the same, and $x_{\text{below}}$ and $\overline{x}_{\text{below}}$ are the same, then this proves that $P$ and $\overline{P}$ have the same up-steps and right-steps above and below the diagonal.
    
\end{proof}

We note this proof is easily generalizable to the mountain notation case. One key thing to notice that in terms of flaws, the last proposition shows that the mirror of a lattice path with $k$ flaws also has $k$ flaws.

\begin{proposition}\label{mubijection}
    Assume the setup of previous multijections, let $n\in\mathbb{N}$, and consider the set of lattice paths in staircase notation from $(0,0)$ to $(n,n)$. Divide this into sets $S_k$, for $k=0,1,\dots, n$, where $S_k$ contains lattice paths with $k$ flaws. Denote the function $\mu:S_k\to S_k$ for $P\in S_k$ by $\pi(P)=\overline{P}$. Then $\mu$ is a bijection.
\end{proposition}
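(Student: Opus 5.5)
The plan has three steps: show that $\mu$ maps $S_k$ into $S_k$, show that $\mu$ is its own inverse on $S_k$, and conclude that it is a bijection. (The statement writes $\pi(P)=\overline{P}$; we read this as the definition of $\mu$.)

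\textbf{Well-definedness.} Let $P\in S_k$. First, $\overline{P}$ is again a lattice path from $(0,0)$ to $(n,n)$. The mirror reverses the order of the steps and swaps up-steps with right-steps, so $\overline{P}$ has exactly $n$ up-steps and $n$ right-steps. A path with this many of each step starts at the origin and ends at $(n,n)$, so $\overline{P}$ lies in the ambient set $L_{2n}$.

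Second, $\overline{P}$ has $k$ flaws. By Proposition \ref{sameupdown}, $P$ and $\overline{P}$ have the same number of up-steps above the diagonal. The number of flaws is precisely this count, so $\overline{P}$ has $k$ flaws and $\overline{P}\in S_k$. Hence $\mu:S_k\to S_k$ is a well-defined map.

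\textbf{Self-inverse property.} By Proposition \ref{pathorder2}, $\overline{\overline{P}}=P$ for every lattice path $P$. This gives $\mu\circ\mu=\mathrm{id}_{S_k}$.

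\textbf{Conclusion.} A map that is its own inverse is a bijection.
\begin{itemize}
\item Injectivity: if $\mu(P_1)=\mu(P_2)$, apply $\mu$ to both sides to get $P_1=P_2$. Alternatively, this follows directly from Proposition \ref{uniqueness}.
\item Surjectivity: any $Q\in S_k$ equals $\mu(\overline{Q})$, and $\overline{Q}\in S_k$ by the well-definedness step.
\end{itemize}
Since $S_k$ is finite, either of these two properties would already suffice on its own.

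\textbf{Main obstacle.} The only nontrivial input is the flaw-preservation step, which rests entirely on Proposition \ref{sameupdown}. If one prefers not to rely on that induction, I would give a direct check instead.

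Write the height after the first $i$ steps of $P$ as $h_i = \#\text{up} - \#\text{right}$. Then the height after $i$ steps of $\overline{P}$ equals $-(h_{2n-i}-h_{2n}) = -h_{2n-i}$, because $h_{2n}=0$. Consider the $j$-th step of $P$, with heights $h_{j-1}$ before it and $h_j$ after it. It becomes step $2n-j+1$ of $\overline{P}$, which goes from height $-h_j$ to height $-h_{j-1}$.

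In particular, the up-steps of $\overline{P}$ correspond exactly to the right-steps of $P$. An up-step of $\overline{P}$ lies above the diagonal exactly when the corresponding right-step of $P$ lies below the diagonal. So the number of flaws of $\overline{P}$ equals the number of right-steps of $P$ below the diagonal. The remaining task is to show that this count equals $k$.

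For this, decompose $P$ at its returns to the diagonal. Each excursion between consecutive returns is balanced, so it contains equally many up-steps and right-steps, all on the same side of the diagonal. Summing over the excursions that lie above the diagonal, the number of right-steps above equals the number of up-steps above, namely $k$. Since $P$ has $n$ right-steps in total, it has $n-k$ right-steps below the diagonal.

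At this point the bookkeeping needs care about which side is called ``above'', so that one does not end up with $n-k$ flaws where $k$ was wanted. I would fix the convention of Proposition \ref{sameupdown} and verify it on the $n=3$ table of Figure \ref{callanpartitions} before finalizing this step.
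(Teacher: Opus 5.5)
Your main argument is the paper's own proof: flaw preservation from Proposition \ref{sameupdown}, $\overline{\overline{P}}=P$ from Proposition \ref{pathorder2}, injectivity from Proposition \ref{uniqueness}, and surjectivity via $Q=\overline{P}$. Putting the flaw count into the well-definedness step, instead of only into surjectivity as the paper does, is if anything slightly cleaner.

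The fallback ``direct check'', however, contains a sign error, and the problem is not one of convention.

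\textbf{The height formula.} The first $i$ steps of $\overline{P}$ are the opposites of $p_{2n-i+1},\dots,p_{2n}$. Their net height change is $h_{2n}-h_{2n-i}$, and taking opposites negates it. So the height of $\overline{P}$ after $i$ steps is $h_{2n-i}-h_{2n}=h_{2n-i}$, not $-h_{2n-i}$. Already for $n=1$, $\overline{ur}=ur$ has height $1=h_1$ after one step.

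\textbf{Consequence.} Step $j$ of $P$, running between heights $h_{j-1}$ and $h_j$, becomes step $2n-j+1$ of $\overline{P}$, running between the same two heights in reverse order. A step lies above the diagonal exactly when $\min(h_{j-1},h_j)\ge 0$, so each step stays on the same side. Hence the up-steps of $\overline{P}$ above the diagonal correspond to the right-steps of $P$ above the diagonal. Your excursion-balance argument shows there are exactly $k$ of these.

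\textbf{Why a change of convention cannot help.} Your intermediate claim that ``above'' corresponds to ``below'' is false. Re-choosing which side is called ``above'' would not repair it: if the mirror really exchanged the two sides, you would get $n-k$ flaws under either convention. That would contradict Proposition \ref{sameupdown}, which your main line relies on.

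Geometrically, the mirror is the reflection of the path in the anti-diagonal $x+y=n$ (in mountain notation, the left--right reflection). This reflection fixes each side of $y=x$ while exchanging up-steps with right-steps. With the sign corrected, your direct check becomes a complete, self-contained proof of flaw preservation. It is considerably shorter than the induction the paper uses for Proposition \ref{sameupdown}.
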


\begin{proof}
    Notice that $\mu$ is well-defined since the mirror operation is well-defined. We need to prove injectivity and surjectivity. For surjectivity, let $P$ be a lattice path in $S_k$. Let $Q=\overline{P}$. By \ref{pathorder2}, then $\mu(Q)=\overline{Q}=P$. By \ref{sameupdown}, $Q\in S_k$. This is true for all paths in $S_k$, so $\mu$ is surjective. For injectivity, suppose there exists $P_1,P_2\in S_k$ such that $\mu(P_1)=\mu(P_2)$. Then $\overline{P_1}=\overline{P_2}$, and by \ref{uniqueness}, then $P_1=P_2$. So $\mu$ is injective. Hence, $\mu$ is a bijection from $S_k$ to itself. Again, this proof is similar when a lattice path is in mountain notation.
\end{proof}

Finally, we refer to lattice paths for which $P=\overline{P}$ as symmetric lattice paths. Two examples of short symmetric paths are given in Figure \ref{basecase}. We conclude our study of properties of the mirror with the following remark. 

\begin{remark}
    For smaller values of $n\in\mathbb{N}$, the mirror may not be as thought-provoking of a concept. Consider that for $n=1$, so a lattice path of length $2n$, the mirror of any lattice path of this length is itself. However, as $n$ grows, the possible lattice paths grows, and ``symmetric'' lattice paths become less common.
\end{remark}

The last few propositions will give us enough to finally define mirror variations of exisiting multijections. In these cases, we will define the variations through bijections between $S_k$ and $S_{k+1}$ in the context of the Callan proof and the Chen proof. Then, we will show choose one Dyck path, and compare its preimages under both the original multijection and the mirror variation.

\subsection{A mirror variation of Callan's proof}

Callan's proof follows what appears to be a cyclic structure, since we swap around one single chosen point. We assume the setup of Callan's paper.

\begin{theorem}[Mirror Variation of Callan's Proof]\label{callanmirror}
    There is a bijection between sets $S_k$ and $S_{k+1}$.
\end{theorem}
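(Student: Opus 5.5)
The plan is to build the new bijection by conjugating Callan's bijection with the mirror map. Let $\psi_k:S_k\to S_{k+1}$ be the bijection from Callan's proof, taken as given from \cite{callan1995pair} and reviewed above. Let $\mu:S_k\to S_k$ be the mirror map of Proposition \ref{mubijection}. Define
\begin{equation*}
    \phi_k=\mu\circ\psi_k\circ\mu : S_k\to S_{k+1},
\end{equation*}
so for $P\in S_k$ we set $\phi_k(P)=\overline{\psi_k(\overline{P})}$. Concretely, we locate Callan's distinguished step on $\overline{P}$, swap the portions before and after it, and reflect the result back. Equivalently, working directly on $P$, we use the mirror image of Callan's rule: scan from the right end instead of the left, with up-steps and right-steps exchanged. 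The theorem then amounts to checking that $\phi_k$ is well defined and bijective.

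The first step is well-definedness. By Proposition \ref{sameupdown}, $P$ and $\overline{P}$ have the same number of up-steps above the diagonal, so $\overline{P}\in S_k$. Callan's map then gives $\psi_k(\overline{P})\in S_{k+1}$, and a second application of Proposition \ref{sameupdown} puts $\overline{\psi_k(\overline{P})}$ in $S_{k+1}$. Here I need $\mu$ on both $S_k$ and $S_{k+1}$, which Proposition \ref{mubijection} supplies for every index.

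The second step is bijectivity. $\phi_k$ is a composition of three bijections, so it is a bijection. Its inverse can be written explicitly as $\phi_k^{-1}=\mu\circ\psi_k^{-1}\circ\mu$, because $\mu\circ\mu=\mathrm{id}$ by Proposition \ref{pathorder2}. Injectivity can also be seen directly: if $\phi_k(P_1)=\phi_k(P_2)$, then Proposition \ref{uniqueness} gives $\psi_k(\overline{P_1})=\psi_k(\overline{P_2})$. Injectivity of $\psi_k$ then gives $\overline{P_1}=\overline{P_2}$, and a second use of Proposition \ref{uniqueness} gives $P_1=P_2$. For surjectivity, given $Q\in S_{k+1}$, the path $P=\overline{\psi_k^{-1}(\overline{Q})}$ satisfies $\phi_k(P)=Q$.

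The main obstacle is not the bijection itself but making the description intrinsic and checking it against Callan's. I would translate the rule ``first down-step returning to the axis after up-step $1$'' under reversal and complementation into a right-to-left rule stated directly on $P$. The point that needs care is that reversing the path also reverses the bottom-to-top, left-to-right labeling of the up-steps. After that, I would verify on the $n=3$ table (Figure \ref{callanpartitions}) that the resulting chains $\phi_{k}\circ\dots\circ\phi_0$ differ from Callan's. That comparison shows the variation is new, but it is not needed for the theorem as stated.
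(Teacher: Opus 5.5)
Your argument is correct, but the bijection you build is not the paper's. The paper post-composes a single mirror, $\Phi_k=\mu\circ\phi_k$, where $\phi_k$ is Callan's map (your $\psi_k$). You instead conjugate, using $\mu\circ\psi_k\circ\mu$.

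For the existence statement the two are equally good. Both are compositions of bijections resting on Propositions~\ref{sameupdown}, \ref{pathorder2} and \ref{mubijection}, and Callan's map alone already proves the theorem. So the difference lies entirely in the multijection each induces.

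Conjugation gives you a clean intrinsic rule (Callan's rule read right to left with complemented steps) and an explicit inverse. But the mirrors telescope along a chain: $(\mu\psi_{k-1}\mu)\circ\dots\circ(\mu\psi_0\mu)=\mu\circ\psi_{k-1}\circ\dots\circ\psi_0\circ\mu$. Hence your multijection is $\mu\circ\alpha\circ\mu$ with $\alpha$ Callan's, and every one of your classes is exactly the mirror image of a Callan class. In the language of Definition~\ref{equivalentmultijections}, it is a translation of Callan's multijection with respect to $\sigma=\mu$.

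It does change the partition as a set partition. For $n=3$, writing $R,U$ for right- and up-steps, the class of $RRRUUU$ is $\{RRRUUU,RRUUUR,RUUURR,UUURRR\}$ under Callan and $\{RRRUUU,URRRUU,UURRRU,UUURRR\}$ under your map. So your planned check against Figure~\ref{callanpartitions} would succeed, but only in this weak sense.

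The paper's $\mu\circ\phi_k$ yields chains $D,\ \mu\phi_0(D),\ \mu\phi_1\mu\phi_0(D),\dots$ in which the interior mirrors do not cancel. Its class of $RRRUUU$ is $\{RRRUUU,URRRUU,URURRU,URURUR\}$, which is not the mirror image of any Callan class. That variation is therefore not explained by the mirror symmetry alone, and that is what the paper's claim of new partitions needs.
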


\begin{proof}
    Denote the bijection Callan constructed between $S_k$ and $S_{k+1}$ as $\phi_k$, so that $\phi_k:S_k\to S_{k+1}$. For any $k$, by \ref{mubijection}, then $\mu$ is a bijection from $S_k$ to itself. Define $\Phi_k:S_k\to S_{k+1}$ as the composition $\mu\circ \phi_k$. The composition of two bijections is again a bijection, so $\Phi_k$ is a valid bijection between $S_k$ and $S_{k+1}$ with $\Phi_k^{-1}:S_{k+1}\to S_k$. 

\end{proof}

With this bijection, what does the resulting multijection look like? We again describe the multijection by its relative partitions. If $D$ is a Dyck path, then the preimage of $D$ under the multijection would be the set
\begin{equation*}
    \{D, \Phi_1(D),\Phi_2(\Phi_1(D)),\dots, \Phi_n(\Phi_{n-1}(\dots(\Phi_1(D))\}
\end{equation*}

We describe the resulting partitions in similar format to the early figure in \ref{callanpartitions}. We encourage the reader to notice how the partitions are different.

\begin{figure}
        \centering
        \begin{subfigure}{0.18\textwidth}
        \centering
        \begin{tikzpicture}[scale=0.55]
            \draw[step=1cm,gray,very thin] (0,0) grid (3,3);
            \draw[blue, dotted, very thick] (0,0) -- (3,3);
            \draw[-stealth,black,ultra thick] (0,0) -- (1,0);
            \draw[-stealth,black,ultra thick] (1,0) -- (2,0);
            \draw[-stealth,black,ultra thick] (2,0) -- (3,0);
            \draw[-stealth,black,ultra thick] (3,0) -- (3,1);
            \draw[-stealth,black,ultra thick] (3,1) -- (3,2);
            \draw[-stealth,black,ultra thick] (3,2) -- (3,3);
        \end{tikzpicture}
        \end{subfigure}
        \hfill
        \begin{subfigure}{0.18\textwidth}
        \centering
        \begin{tikzpicture}[scale=0.55]
            \draw[step=1cm,gray,very thin] (0,0) grid (3,3);
            \draw[blue, dotted, very thick] (0,0) -- (3,3);
            \draw[-stealth,black,ultra thick] (0,0) -- (1,0);
            \draw[-stealth,black,ultra thick] (1,0) -- (2,0);
            \draw[-stealth,black,ultra thick] (2,0) -- (2,1);
            \draw[-stealth,black,ultra thick] (2,1) -- (3,1);
            \draw[-stealth,black,ultra thick] (3,1) -- (3,2);
            \draw[-stealth,black,ultra thick] (3,2) -- (3,3);
        \end{tikzpicture}
        \end{subfigure}
        \hfill
        \begin{subfigure}{0.18\textwidth}
        \centering
        \begin{tikzpicture}[scale=0.55]
            \draw[step=1cm,gray,very thin] (0,0) grid (3,3);
            \draw[blue, dotted, very thick] (0,0) -- (3,3);
            \draw[-stealth,black,ultra thick] (0,0) -- (1,0);
            \draw[-stealth,black,ultra thick] (1,0) -- (2,0);
            \draw[-stealth,black,ultra thick] (2,0) -- (2,1);
            \draw[-stealth,black,ultra thick] (2,1) -- (2,2);
            \draw[-stealth,black,ultra thick] (2,2) -- (3,2);
            \draw[-stealth,black,ultra thick] (3,2) -- (3,3);
        \end{tikzpicture}
        \end{subfigure}
        \begin{subfigure}{0.18\textwidth}
        \centering
        
        \begin{tikzpicture}[scale=0.55]
            \draw[step=1cm,gray,very thin] (0,0) grid (3,3);
            \draw[blue, dotted, very thick] (0,0) -- (3,3);
            \draw[-stealth,black,ultra thick] (0,0) -- (1,0);
            \draw[-stealth,black,ultra thick] (1,0) -- (1,1);
            \draw[-stealth,black,ultra thick] (1,1) -- (2,1);
            \draw[-stealth,black,ultra thick] (2,1) -- (3,1);
            \draw[-stealth,black,ultra thick] (3,1) -- (3,2);
            \draw[-stealth,black,ultra thick] (3,2) -- (3,3);
        \end{tikzpicture}
        \end{subfigure}
        \hfill
        \begin{subfigure}{0.18\textwidth}
        \centering
        \begin{tikzpicture}[scale=0.55]
            \draw[step=1cm,gray,very thin] (0,0) grid (3,3);
            \draw[blue, dotted, very thick] (0,0) -- (3,3);
            \draw[-stealth,black,ultra thick] (0,0) -- (1,0);
            \draw[-stealth,black,ultra thick] (1,0) -- (1,1);
            \draw[-stealth,black,ultra thick] (1,1) -- (2,1);
            \draw[-stealth,black,ultra thick] (2,1) -- (2,2);
            \draw[-stealth,black,ultra thick] (2,2) -- (3,2);
            \draw[-stealth,black,ultra thick] (3,2) -- (3,3);
        \end{tikzpicture}
        \end{subfigure}
        
        \vspace{0.5cm}

        \begin{subfigure}{0.18\textwidth}
        \centering
        \begin{tikzpicture}[scale=0.55]
            \draw[step=1cm,gray,very thin] (0,0) grid (3,3);
            \draw[blue, dotted, very thick] (0,0) -- (3,3);
            \draw[-stealth,black,ultra thick] (0,0) -- (0,1);
            \draw[-stealth,black,ultra thick] (0,1) -- (1,1);
            \draw[-stealth,black,ultra thick] (1,1) -- (2,1);
            \draw[-stealth,black,ultra thick] (2,1) -- (3,1);
            \draw[-stealth,black,ultra thick] (3,1) -- (3,2);
            \draw[-stealth,black,ultra thick] (3,2) -- (3,3);
        \end{tikzpicture}
        \end{subfigure}
        \hfill
        \begin{subfigure}{0.18\textwidth}
        \centering
        \begin{tikzpicture}[scale=0.55]
            \draw[step=1cm,gray,very thin] (0,0) grid (3,3);
            \draw[blue, dotted, very thick] (0,0) -- (3,3);
            \draw[-stealth,black,ultra thick] (0,0) -- (0,1);
            \draw[-stealth,black,ultra thick] (0,1) -- (1,1);
            \draw[-stealth,black,ultra thick] (1,1) -- (2,1);
            \draw[-stealth,black,ultra thick] (2,1) -- (2,2);
            \draw[-stealth,black,ultra thick] (2,2) -- (3,2);
            \draw[-stealth, black, ultra thick] (3,2) -- (3,3);
        \end{tikzpicture}
        \end{subfigure}
        \hfill
        \begin{subfigure}{0.18\textwidth}
        \centering
        \begin{tikzpicture}[scale=0.55]
            \draw[step=1cm,gray,very thin] (0,0) grid (3,3);
            \draw[blue, dotted, very thick] (0,0) -- (3,3);
            \draw[-stealth,black,ultra thick] (0,0) -- (1,0);
            \draw[-stealth,black,ultra thick] (1,0) -- (2,0);
            \draw[-stealth,black,ultra thick] (2,0) -- (2,1);
            \draw[-stealth,black,ultra thick] (2,1) -- (2,2);
            \draw[-stealth,black,ultra thick] (2,2) -- (2,3);
            \draw[-stealth, black, ultra thick] (2,3) -- (3,3);
        \end{tikzpicture}
        \end{subfigure}
        \begin{subfigure}{0.18\textwidth}
        \centering
        
        \begin{tikzpicture}[scale=0.55]
            \draw[step=1cm,gray,very thin] (0,0) grid (3,3);
            \draw[blue, dotted, very thick] (0,0) -- (3,3);
            \draw[-stealth,black,ultra thick] (0,0) -- (1,0);
            \draw[-stealth,black,ultra thick] (1,0) -- (1,1);
            \draw[-stealth,black,ultra thick] (1,1) -- (1,2);
            \draw[-stealth,black,ultra thick] (1,2) -- (2,2);
            \draw[-stealth,black,ultra thick] (2,2) -- (3,2);
            \draw[-stealth,black,ultra thick] (3,2) -- (3,3);
        \end{tikzpicture}
        \end{subfigure}
        \hfill
        \begin{subfigure}{0.18\textwidth}
        \centering
        \begin{tikzpicture}[scale=0.55]
            \draw[step=1cm,gray,very thin] (0,0) grid (3,3);
            \draw[blue, dotted, very thick] (0,0) -- (3,3);
            \draw[-stealth,black,ultra thick] (0,0) -- (1,0);
            \draw[-stealth,black,ultra thick] (1,0) -- (1,1);
            \draw[-stealth,black,ultra thick] (1,1) -- (2,1);
            \draw[-stealth,black,ultra thick] (2,1) -- (2,2);
            \draw[-stealth,black,ultra thick] (2,2) -- (2,3);
            \draw[-stealth,black,ultra thick] (2,3) -- (3,3);
        \end{tikzpicture}
        \end{subfigure}

        \vspace{0.5cm}

        \begin{subfigure}{0.18\textwidth}
        \centering
        \begin{tikzpicture}[scale=0.55]
            \draw[step=1cm,gray,very thin] (0,0) grid (3,3);
            \draw[blue, dotted, very thick] (0,0) -- (3,3);
            \draw[-stealth,black,ultra thick] (0,0) -- (0,1);
            \draw[-stealth,black,ultra thick] (0,1) -- (1,1);
            \draw[-stealth,black,ultra thick] (1,1) -- (1,2);
            \draw[-stealth,black,ultra thick] (1,2) -- (2,2);
            \draw[-stealth,black,ultra thick] (2,2) -- (3,2);
            \draw[-stealth,black,ultra thick] (3,2) -- (3,3);
        \end{tikzpicture}
        \end{subfigure}
        \hfill
        \begin{subfigure}{0.18\textwidth}
        \centering
        \begin{tikzpicture}[scale=0.55]
            \draw[step=1cm,gray,very thin] (0,0) grid (3,3);
            \draw[blue, dotted, very thick] (0,0) -- (3,3);
            \draw[-stealth,black,ultra thick] (0,0) -- (1,0);
            \draw[-stealth,black,ultra thick] (1,0) -- (1,1);
            \draw[-stealth,black,ultra thick] (1,1) -- (1,2);
            \draw[-stealth,black,ultra thick] (1,2) -- (2,2);
            \draw[-stealth,black,ultra thick] (2,2) -- (2,3);
            \draw[-stealth,black,ultra thick] (2,3) -- (3,3);
        \end{tikzpicture}
        \end{subfigure}
        \hfill
        \begin{subfigure}{0.18\textwidth}
        \centering
        \begin{tikzpicture}[scale=0.55]
            \draw[step=1cm,gray,very thin] (0,0) grid (3,3);
            \draw[blue, dotted, very thick] (0,0) -- (3,3);
            \draw[-stealth,black,ultra thick] (0,0) -- (0,1);
            \draw[-stealth,black,ultra thick] (0,1) -- (0,2);
            \draw[-stealth,black,ultra thick] (0,2) -- (1,2);
            \draw[-stealth,black,ultra thick] (1,2) -- (2,2);
            \draw[-stealth,black,ultra thick] (2,2) -- (3,2);
            \draw[-stealth,black,ultra thick] (3,2) -- (3,3);
        \end{tikzpicture}
        \end{subfigure}
        \begin{subfigure}{0.18\textwidth}
        \centering
        
        \begin{tikzpicture}[scale=0.55]
            \draw[step=1cm,gray,very thin] (0,0) grid (3,3);
            \draw[blue, dotted, very thick] (0,0) -- (3,3);
            \draw[-stealth,black,ultra thick] (0,0) -- (0,1);
            \draw[-stealth,black,ultra thick] (0,1) -- (1,1);
            \draw[-stealth,black,ultra thick] (1,1) -- (2,1);
            \draw[-stealth,black,ultra thick] (2,1) -- (2,2);
            \draw[-stealth,black,ultra thick] (2,2) -- (2,3);
            \draw[-stealth,black,ultra thick] (2,3) -- (3,3);
        \end{tikzpicture}
        \end{subfigure}
        \hfill
        \begin{subfigure}{0.18\textwidth}
        \centering
        \begin{tikzpicture}[scale=0.55]
            \draw[step=1cm,gray,very thin] (0,0) grid (3,3);
            \draw[blue, dotted, very thick] (0,0) -- (3,3);
            \draw[-stealth,black,ultra thick] (0,0) -- (1,0);
            \draw[-stealth,black,ultra thick] (1,0) -- (1,1);
            \draw[-stealth,black,ultra thick] (1,1) -- (1,2);
            \draw[-stealth,black,ultra thick] (1,2) -- (1,3);
            \draw[-stealth,black,ultra thick] (1,3) -- (2,3);
            \draw[-stealth,black,ultra thick] (2,3) -- (3,3);
        \end{tikzpicture}
        \end{subfigure}

        \vspace{0.5cm}

        \begin{subfigure}{0.18\textwidth}
        \centering
        \begin{tikzpicture}[scale=0.55]
            \draw[step=1cm,gray,very thin] (0,0) grid (3,3);
            \draw[blue, dotted, very thick] (0,0) -- (3,3);
            \draw[-stealth,black,ultra thick] (0,0) -- (0,1);
            \draw[-stealth,black,ultra thick] (0,1) -- (1,1);
            \draw[-stealth,black,ultra thick] (1,1) -- (1,2);
            \draw[-stealth,black,ultra thick] (1,2) -- (2,2);
            \draw[-stealth,black,ultra thick] (2,2) -- (2,3);
            \draw[-stealth,black,ultra thick] (2,3) -- (3,3);
        \end{tikzpicture}
        \end{subfigure}
        \hfill
        \begin{subfigure}{0.18\textwidth}
        \centering
        \begin{tikzpicture}[scale=0.55]
            \draw[step=1cm,gray,very thin] (0,0) grid (3,3);
            \draw[blue, dotted, very thick] (0,0) -- (3,3);
            \draw[-stealth,black,ultra thick] (0,0) -- (0,1);
            \draw[-stealth,black,ultra thick] (0,1) -- (0,2);
            \draw[-stealth,black,ultra thick] (0,2) -- (1,2);
            \draw[-stealth,black,ultra thick] (1,2) -- (1,3);
            \draw[-stealth,black,ultra thick] (1,3) -- (2,3);
            \draw[-stealth,black,ultra thick] (2,3) -- (3,3);
        \end{tikzpicture}
        \end{subfigure}
        \hfill
        \begin{subfigure}{0.18\textwidth}
        \centering
        \begin{tikzpicture}[scale=0.55]
            \draw[step=1cm,gray,very thin] (0,0) grid (3,3);
            \draw[blue, dotted, very thick] (0,0) -- (3,3);
            \draw[-stealth,black,ultra thick] (0,0) -- (0,1);
            \draw[-stealth,black,ultra thick] (0,1) -- (0,2);
            \draw[-stealth,black,ultra thick] (0,2) -- (1,2);
            \draw[-stealth,black,ultra thick] (1,2) -- (2,2);
            \draw[-stealth,black,ultra thick] (2,2) -- (2,3);
            \draw[-stealth,black,ultra thick] (2,3) -- (3,3);
        \end{tikzpicture}
        \end{subfigure}
        \begin{subfigure}{0.18\textwidth}
        \centering
        
        \begin{tikzpicture}[scale=0.55]
            \draw[step=1cm,gray,very thin] (0,0) grid (3,3);
            \draw[blue, dotted, very thick] (0,0) -- (3,3);
            \draw[-stealth,black,ultra thick] (0,0) -- (0,1);
            \draw[-stealth,black,ultra thick] (0,1) -- (1,1);
            \draw[-stealth,black,ultra thick] (1,1) -- (1,2);
            \draw[-stealth,black,ultra thick] (1,2) -- (1,3);
            \draw[-stealth,black,ultra thick] (1,3) -- (2,3);
            \draw[-stealth,black,ultra thick] (2,3) -- (3,3);
        \end{tikzpicture}
        \end{subfigure}
        \hfill
        \begin{subfigure}{0.18\textwidth}
        \centering
        \begin{tikzpicture}[scale=0.55]
            \draw[step=1cm,gray,very thin] (0,0) grid (3,3);
            \draw[blue, dotted, very thick] (0,0) -- (3,3);
            \draw[-stealth,black,ultra thick] (0,0) -- (0,1);
            \draw[-stealth,black,ultra thick] (0,1) -- (0,2);
            \draw[-stealth,black,ultra thick] (0,2) -- (0,3);
            \draw[-stealth,black,ultra thick] (0,3) -- (1,3);
            \draw[-stealth,black,ultra thick] (1,3) -- (2,3);
            \draw[-stealth,black,ultra thick] (2,3) -- (3,3);
        \end{tikzpicture}
        \end{subfigure}

        \caption{Set of partitions when $n=3$ with the simple mirror variation of Callan's proof. Compare to Figure \ref{callanpartitions}.}
        \label{callansimplemirror}
    \end{figure}

\subsection{A mirror variation of Chen's proof}

We have just shown a mirror variation of Callan's proof, and we wish to display two variations of the Chen proof. The first is a simple variation identical to the process we described for a mirror variation of Callan's proof. The second variation is more complex, proposing three different multijections.

\begin{theorem}[Simple Mirror Variation of Chen's Proof]\label{chenmirror}
    There is a bijection between the sets $S_k$ and $S_{k+1}$.
\end{theorem}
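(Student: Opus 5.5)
The plan is to mirror the proof of Theorem \ref{callanmirror} exactly, replacing Callan's bijection with Chen's. Let $\psi_k:S_k\to S_{k+1}$ denote the bijection constructed by Chen, which decomposes a path with $k$ flaws as $BuAdC$ and reassembles it as $AdBuC$. Chen shows that the reassembled path has $k+1$ flaws and that the rearrangement is reversible, so we may take $\psi_k$ to be a bijection. We then define
\begin{equation*}
    \Psi_k=\mu\circ\psi_k:S_k\to S_{k+1},
\end{equation*}
where $\mu$ is the mirror map of Proposition \ref{mubijection}.

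First I will check that $\mu$ really restricts to a bijection $S_{k+1}\to S_{k+1}$ in the convention used for Chen's proof. Chen works in mountain notation and counts flaws as down-steps below the $x$-axis. Proposition \ref{sameupdown} was proved in staircase notation, but the paper notes it carries over to mountain notation. Concretely, a mountain path $p_1\cdots p_{2n}$ has height profile $h_i$, and its mirror $\overline{p_{2n}}\cdots\overline{p_1}$ has heights $h'_j=h_{2n-j}$. Each step below the axis is sent to a step below the axis, and down-steps are exchanged with up-steps in reversed order. Since every excursion below the axis has equally many up- and down-steps, the number of down-steps below the axis is preserved. So $\mu(S_{k+1})\subseteq S_{k+1}$. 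Propositions \ref{pathorder2} and \ref{uniqueness} then give that $\mu$ is an involution, hence a bijection of $S_{k+1}$.

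With both facts in hand, $\Psi_k$ is a composition of the bijection $\psi_k:S_k\to S_{k+1}$ with the bijection $\mu|_{S_{k+1}}$. It is therefore a bijection, with inverse $\Psi_k^{-1}=\psi_k^{-1}\circ\mu$, using $\mu^{-1}=\mu$. This proves the theorem, and Proposition \ref{setup} then produces the associated multijection.

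The only step needing care is the notational transfer in the first check: that the mirror preserves Chen's flaw count, below the axis in mountain notation, rather than the staircase flaw count. I expect the height-reversal argument above to settle it directly. If needed, I would instead translate Chen's paths back to staircase notation and invoke Proposition \ref{sameupdown} verbatim.
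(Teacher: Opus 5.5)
Your proposal is correct and matches the paper's proof: the paper also sets $\Phi_k=\mu\circ\phi_k$, with $\phi_k$ Chen's bijection, and concludes because $\mu$ is a bijection of $S_{k+1}$ (Proposition~\ref{mubijection}) and a composition of bijections is a bijection. Your height-reversal check $h'_j=h_{2n-j}$, showing that the mirror preserves Chen's below-axis flaw count, is sound and more direct; it justifies the mountain-notation version of Proposition~\ref{mubijection}, which the paper only asserts carries over.
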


\begin{proof}
    Similar to the Callan mirror variation, we propose the most simple mirror variation. Let $\phi_k:S_k\to S_{k+1}$ be Chen's bijection between $S_k$ and $S_{k+1}$ from his original paper, and define $\Phi_k:S_k\to S_{k+1}$ as the composition $\mu\circ \phi_k$. By \ref{mubijection}, $\mu$ is a bijection from $S_{k+1}$ to itself, and the composition of bijections is once again a bijection.
\end{proof}

In Figure \ref{chensimplemirror}, we depict the partitions induced by the simple mirror variation. We encourage the reader to compare them with the the partitions under Chen's original proof, in Figure \ref{chenpartitions}.

\begin{figure}
        \centering
        \begin{subfigure}{0.18\textwidth}
        \centering

        \end{subfigure}

        \caption{Set of partitions when $n=3$ with Chen's proof.}
        \label{chenpartitions}
    \end{figure}

\begin{figure}
        \centering
        \begin{subfigure}{0.18\textwidth}
        \centering
        %
        \end{subfigure}

        \caption{Set of partitions when $n=3$ with the simple mirror variation of Chen's proof.}
        \label{chensimplemirror}
    \end{figure}

\begin{theorem}[Complex Mirror Variations of Chen's Proof]\label{chenmirrorcomplex}
    There is a bijection between the sets $S_k$ and $S_{k+1}$.
\end{theorem}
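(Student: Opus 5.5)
The plan is to build the three ``complex'' variations by applying the mirror $\mu$ inside Chen's decomposition instead of after it. The simple variation applied $\mu$ to the whole output path. Here we mirror the individual subpaths $A$, $B$, $C$.

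Fix $P\in S_k$ and write it as $P=BuAdC$, exactly as in Chen's proof. Here $u$ is the first up-step above the $x$-axis and $d$ is the first down-step returning to the $x$-axis after $u$. Chen's map sends $P$ to $AdBuC$. The three candidate variations are
\begin{equation*}
\Psi^{(1)}_k(P)=\overline{A}\,d\,B\,u\,C,\qquad \Psi^{(2)}_k(P)=A\,d\,\overline{B}\,u\,C,\qquad \Psi^{(3)}_k(P)=A\,d\,B\,u\,\overline{C}.
\end{equation*}
If one of these fails to be reversible, a fallback is to mirror a different single block.

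The first step is to check that each $\Psi^{(i)}_k$ lands in $S_{k+1}$. The key observation is that each of $A$, $B$, $C$ begins and ends at the same height. $A$ is a path from height $1$ back to height $1$ that stays at or above height $1$. $B$ and $C$ start and end on the $x$-axis. Consequently, mirroring a block (which reverses it and swaps up/down) keeps its endpoints at the same heights, and the block stays on the same side of its base line. By the mountain-notation analogue of Proposition \ref{sameupdown}, it also has the same number of steps above and below that line. Hence the flaw count of $\Psi^{(i)}_k(P)$ equals that of Chen's image $AdBuC$, which Chen showed is $k+1$.

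The second step, and the main obstacle, is invertibility. Chen's inverse recovers the positions of $d$ and $u$ in $AdBuC$ from geometric landmarks, namely the relevant first touches of the axis. I must show these landmarks are unchanged when one block is replaced by its mirror. Mirroring $B$ or $C$ is harmless, because within a block it only rearranges excursions above and below the axis; the mirror of a block that touches the axis only at its ends does not touch it in between (this is proved inside Proposition \ref{sameupdown}). Mirroring $A$ is the delicate case, since $A$ sits at the front of the image and the identification of $d$ depends on $A$ staying weakly above level $0$. Here I would use the fact that $\overline{A}$ also stays weakly above level $1$, by applying Proposition \ref{sameupdown} to $A$ shifted down one unit.

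Once the landmarks are preserved, the inverse is simple. Run Chen's inverse decomposition, mirror the affected block back using Proposition \ref{pathorder2}, and reassemble. This shows $\Psi^{(i)}_k$ is injective. Since $|S_k|=|S_{k+1}|$ (Callan, or Theorem \ref{chenmirror}), injectivity gives bijectivity.

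An alternative that avoids the landmark check is to realize each $\Psi^{(i)}_k$ as a composition. It equals $\tau_i\circ\phi_k$, where $\tau_i:S_{k+1}\to S_{k+1}$ mirrors the corresponding block of Chen's decomposition of the image. Then it suffices to show $\tau_i$ is an involution on $S_{k+1}$, using Proposition \ref{pathorder2} together with the fact that the block decomposition is preserved.
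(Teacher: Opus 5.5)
Your proposal is correct and takes essentially the paper's route: the same three maps (Chen's reassembly composed with a blockwise mirror), flaw preservation via Proposition \ref{sameupdown}/\ref{mubijection}, and invertibility by running Chen's inverse and un-mirroring with Proposition \ref{pathorder2}. Your $\Psi^{(2)}_k$ is in fact literally the paper's $A\overline{dBu}C$, since $\overline{dBu}=\overline{u}\,\overline{B}\,\overline{d}=d\,\overline{B}\,u$, and your explicit check that $\overline{A}$ stays weakly above the axis and $\overline{B}$ strictly below it (so Chen's landmarks $d$ and $u$ are unchanged) supplies the step the paper only asserts.
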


\begin{proof}
    In this case, we depict three new permutations of the paths, related to mirroring individual subpaths instead of the entire path. Firstly, Chen's bijection decomposes a lattice path with $k$ flaws into subpaths $BuAdC$, and reassembles them into a lattice path $AdBuC$, which he proves has $k+1$ flaws. Then, he shows this process is reversible.

    We propose that the following paths also have $k+1$ flaws: $\overline{A}dBuC,A\overline{dBu}C$, and $AdBu\overline{C}$. The crux of Chen's proof is as follows: by his construction, $A$ has zero flaws, $B$ has $k_1$ flaws, $dBu$ has $k_1+1$ flaws, and $C$ has the remaining flaws $k-k_1$ flaws, so the resulting concatenation has $k+1$ flaws. If we apply the mirror to either $A, dBu,$ or $C$ after applying the original Chen bijection, we want to get a path which does have $k+1$ flaws.
    
    By \ref{mubijection}, $\overline{A}$ will have the same number of flaws as $A$, and similarly for $\overline{dBu}$ and $\overline{C}$. By Chen's construction, each one of these subpaths touches the $x$-axis at both ends, so each of $\overline{A}dBuC,A\overline{dBu}C$, and $AdBu\overline{C}$ would have $k+1$ flaws. By \ref{uniqueness}, it would be possible to recover the respective portions of the path as Chen had in his proof, and then mirror whichever component of the path had been mirrored. So all three paths also have $k+1$ flaws, and since the mirror is reversible and Chen's reversible general construction, we can recover the original unique $k$ flaws.
\end{proof}

Since Chen's proof deals with subpaths, for small values of $n$, it can be hard to see that the added step of the mirror induces a noticeable change of partitions. For this case, we choose a larger $n$-value, or $n=7$ corresponding to a lattice path with length 14, and we give one partition induced by the multijection. In all five cases, we start with the same Dyck path, but show that the partitions begin to diverge after the mirror variation is applied over and over. We have the original bijections in Figure \ref{chenusualpartitions}, and then the complex mirror variation in Figures \ref{chencomplexmirror1}, \ref{chencomplexmirror2}, and \ref{chencomplexmirror3}.

\begin{figure}
        \centering
        \begin{subfigure}{0.45\textwidth}
        \centering

        \end{subfigure}

        \caption{When the bijection maps $BuAdC\mapsto AdBuC$.}
        \label{chenusualpartitions}
    \end{figure}

\begin{figure}
        \centering
        \begin{subfigure}{0.45\textwidth}
        \centering
        %
        \end{subfigure}

        \caption{When the bijection is given as $BuAdC\mapsto \overline{A}dBuC$.}
        \label{chencomplexmirror1}
    \end{figure}

    \begin{figure}
        \centering
        \begin{subfigure}{0.45\textwidth}
        \centering
        %
        \end{subfigure}

        \caption{When the bijection is given as $BuAdC\mapsto A\overline{dBu}C$.}
        \label{chencomplexmirror2}
    \end{figure}

    \begin{figure}
        \centering
        \begin{subfigure}{0.45\textwidth}
        \centering
        %
        \end{subfigure}

        \caption{Finally, when the bijection takes $BuAdC\mapsto AdBu\overline{C}$.}
        \label{chencomplexmirror3}
    \end{figure}

\section{Acknowledgements}

I would like to thank Garth Isaak for first introducing me to this topic in the summer of 2025 as part of a research project. Additionally, the precise definitions of a multijection, as well as conditions for their equivalence in \ref{multijection1} and \ref{equivalentmultijections} come directly from him.

Additionally, Figure \ref{chenpartitions} was crafted from the partitions directly listed in Chen's paper\cite{chen2008chung}. We give credit to him for writing out the partitions there, and the reproduction in this paper is simply to align with the formatting we have taken here.

\nocite{*} 
\bibliographystyle{abbrv}
\bibliography{references}

@misc{isaakmultijections,
    author    = "Garth Isaak and Larry Langley",
    title     = "\href{https://bpb-us-w2.wpmucdn.com/wordpress.lehigh.edu/dist/b/4296/files/2026/05/CatalanMultijectionsNew.pdf}{Catalan Multijections}",
    journal = "Unpublished",
    url       = {https://bpb-us-w2.wpmucdn.com/wordpress.lehigh.edu/dist/b/4296/files/2026/05/CatalanMultijectionsNew.pdf}
}

@article{callan1995pair,
  title={Pair them up! A visual approach to the Chung-Feller theorem},
  author={Callan, David},
  journal={The College Mathematics Journal},
  volume={26},
  number={3},
  pages={196--198},
  year={1995},
  publisher={Taylor \& Francis}
}

@article{chungfeller,
author = {Kai Lai Chung  and W. Feller },
title = {On Fluctuations in Coin-Tossing*},
journal = {Proceedings of the National Academy of Sciences},
volume = {35},
number = {10},
pages = {605-608},
year = {1949},
doi = {10.1073/pnas.35.10.605},
URL = {https://www.pnas.org/doi/abs/10.1073/pnas.35.10.605},
eprint = {https://www.pnas.org/doi/pdf/10.1073/pnas.35.10.605}}

@book{stanley2015catalan,
  title={Catalan numbers},
  author={Stanley, Richard P},
  year={2015},
  publisher={Cambridge University Press}
}

@article{rubenstein1994catalan,
  title={Catalan numbers revisited},
  author={Rubenstein, Daniel},
  journal={Journal of Combinatorial Theory, Series A},
  volume={68},
  number={2},
  pages={486--490},
  year={1994},
  publisher={Elsevier}
}

@article{woan2001uniform,
  title={Uniform partitions of lattice paths and Chung-Feller generalizations},
  author={Woan, Wen-jin},
  journal={The American Mathematical Monthly},
  volume={108},
  number={6},
  pages={556--559},
  year={2001},
  publisher={Taylor \& Francis}
}

@article{chen2008chung,
  title={The Chung-Feller theorem revisited.},
  author={Chen, Young-Ming},
  journal={Discrete Mathematics},
  volume={308},
  number={7},
  pages={1328--1329},
  year={2008},
  publisher={Elsevier Science}
}

@misc{poster,
  author       = {Dombrowski, Muhammad Adam and Le, Ivory},
  title        = {Catalan Multijections: which partitions are the same?},
  year         = {2025},
  publisher    = {Brown University Digital Repository},
  howpublished = {\url{https://repository.library.brown.edu/studio/item/bdr:zjjjsfb8/}},
  note         = {Accessed: May 25, 2026} 
}

\end{document}